\providecommand{\U}[1]{\protect\rule{.1in}{.1in}}
\newtheorem{theorem}{Theorem}
\newtheorem{definition}{Definition}
\newtheorem{example}{Example}
\newtheorem{lemma}{Lemma}
\newtheorem{remark}{Remark}
\newproof{proof}{Proof}
\numberwithin{equation}{section}
\begin{document}
\begin{frontmatter}
\title{Almost automorphic solutions of discrete delayed neutral system\tnoteref{t1}}
\tnotetext[t1]{This study is supported by The Scientific and Technological Research Council
of Turkey (Grant number 1649B031101152).}
\author{Murat Ad\i var\corref{cor1}}
\ead{murat.adivar@ieu.edu.tr}
\address{ Izmir University of Economics\\
Department of Mathematics, 35330, Izmir Turkey}
\cortext[cor1]{Corresponding author}
\author{Halis Can Koyuncuo\u{g}lu}
\ead{can.koyuncuoglu@ieu.edu.tr}
\address{ Izmir University of Economics\\
Department of Mathematics, 35330, Izmir Turkey}

\begin{abstract}
We study almost automorphic solutions of the discrete delayed neutral dynamic
system%
\[
x(t+1)=A(t)x(t)+\Delta Q(t,x(t-g(t)))+G(t,x(t),x(t-g(t)))
\]
by means of a fixed point theorem due to Krasnoselskii. Using discrete variant of exponential
dichotomy and proving uniqueness of projector of discrete exponential dichotomy we invert the equation and
obtain some limit results leading to sufficient conditions for the existence
of almost automorphic solutions of the neutral system. Unlike the existing
literature we prove our existence results without assuming boundedness of
inverse matrix $A\left(  t\right)  ^{-1}$. Hence, we significantly improve the
results in the existing literature. We provide two examples to illustrate
effectiveness of our results. Finally, we also provide an existence result for
almost periodic solutions of the system.

\end{abstract}
\begin{keyword}

Almost automorphic\sep almost periodic\sep discrete exponential
dichotomy\sep discrete nonlinear neutral system\sep Krasnoselskii\sep unique projection

\MSC[2010] Primary 39A10\sep 39A12  Secondary 39A13 \sep 39B72
\end{keyword}
\end{frontmatter}

\section{Introduction}

The theory of neutral type equations has a significant application potential
in certain fields of applied mathematics, biology, and physics dealing with
modelling and controlling the dynamics of real life processes (see
\cite{beddington}, \cite{DeAngelis}, \cite{Fan}, \cite{Wang}, and references
therein). In particular, investigation of periodic solutions of neutral
dynamic systems has a special importance for researchers interested in
biological models of certain type of populations having periodical structures
(see \cite{Braverman}, \cite{Liu}, and \cite{Lu}). There is a vast literature
on stability analysis, oscillation theory, and periodic solutions of neutral
differential and neutral functional equations (see e.g. \cite{Adivar2},
\cite{Islam}, \cite{Raffoul1}, \cite{Raffoul2}, \cite{Sficas}, and \cite{Yu}).
We may refer to \cite{Adivar}, \cite{Kaufmann}, \cite{Raffoul},
\cite{Raffoul3}, and \cite{Raffoul4} for studies handling neutral difference
and neutral dynamic equations on time scales.

Periodicity may be a strong restriction in some specific real life models
including functions not strictly periodic but having values close enough to
each other at every different period. Many mathematical models (see e.g.
\cite{Hioe}, \cite{ohta1}, and \cite{ohta2}) in signal processing and
astrophysics require the use of almost periodic functions, informally, being a
nearly periodic functions where any one period is virtually identical to its
adjacent periods but not necessarily similar to periods much farther away in
time. The theory of almost periodic functions was first introduced by H. Bohr
\cite{bohr} and generalized by A. S. Besicovitch, W. Stepanoff, S. Bochner, and
J. von Neumann at the beginning of 20th century (see \cite{besitovitch
0}, \cite{Bochner}, \cite{Bochner}, \cite{bochner and neumann}, and
\cite{Stepanov}). The idea of almost periodicity can roughly be regarded as a
relaxation of strict the periodicity notion. A continuous function
$f:\mathbb{R\rightarrow R}$ is said to be almost periodic if the following
characteristic property holds:

$\boldsymbol{A.}$ \textit{For any }$\varepsilon>0$,\textit{ the set}%
\[
E\left(  \varepsilon,f(x)\right)  :=\left\{  \tau:\left\vert f\left(
x+\tau\right)  -f(x)\right\vert <\varepsilon\text{ for all }x\in
\mathbb{R}\right\}
\]
\textit{is relatively dense in the real line }$\mathbb{R}$\textit{. That is,
for any }$\varepsilon>0$, there exists a number $l\left(  \varepsilon\right)
>0$ such that any interval of length $l\left(  \varepsilon\right)  $ contains
a number in $E\left(  \varepsilon,f(x)\right)  $.

Afterwards, S. Bochner showed that almost periodicity is equivalent to the
following characteristic property which is also called \textit{the normality
condition}:

$\boldsymbol{B.}$ \textit{From any sequence of the form }$\left\{  f\left(
x+h_{n}\right)  \right\}  ,$\textit{ where }$h_{n}$\textit{ are real numbers,
one can extract a subsequence converging uniformly on the real line (see
\cite{besicovitch}, \cite{bochner 0} and \cite{corduneanu}).}

Obviously, every periodic function is almost periodic. However, there exist
almost periodic functions which are not periodic. For instance, the function%
\[
f\left(  t\right)  =e^{it}+e^{i\pi t}%
\]
is almost periodic but there is no any real number $\omega\neq0$ such that
$f\left(  t+\omega\right)  =f\left(  t\right)  $, since the functions $e^{it}$
and $e^{i\pi t}$ are linearly independent.

Theory of almost automorphic functions was first studied by S. Bochner
\cite{Bochner}. It is a property of a function which can be obtained by
replacing convergence with uniform convergence in normality condition (B).
More explicitly, a continuous function $f:\mathbb{R\rightarrow R}$ is said to
be almost automorphic if for every sequence $\left\{  h_{n}^{\prime}\right\}
_{n\in\mathbb{Z}_{+}}$ of real numbers there exists a subsequence $\left\{
h_{n}\right\}  $ such that $\lim_{m\rightarrow\infty}\lim_{n\rightarrow\infty
}f(t+h_{n}-h_{m})=f(t)$ for each $t\in\mathbb{R}$. For more reading on almost
automorphic functions, we refer to \cite{Diagana} and \cite{Nguerekata}.
Obviously, almost periodicity implies almost automorphicity but not vice versa. It
is shown in \cite{veech2} that the function%
\[
f\left(  t\right)  =\frac{2+\exp\left(  it\right)  +\exp\left(  i\sqrt
{2}t\right)  }{\left\vert 2+\exp\left(  it\right)  +\exp\left(  i\sqrt
{2}t\right)  \right\vert }\text{, }t\in\mathbb{R}%
\]
is almost automorphic but not almost periodic.

Unlike the vast literature on almost periodicity, there is a poor research
backlog on almost automorphic solutions of difference equations. To the best
of our knowledge the study of almost automorphic solutions of difference
equations was begun by Araya et al. in \cite{Araya}. Afterwards, C. Lizama
and J. G. Mesquita \cite{Lizama2} studied almost automorphic solutions of
non-autonomous difference equations%
\[
u\left(  k+1\right)  =A\left(  k\right)  u\left(  k\right)  +f\left(
k,u\left(  k\right)  \right)  ,\ \ k\in\mathbb{Z}.
\]
Employing exponential dichotomy and contraction mapping principle they
proposed some existence results. Furthermore, in \cite{Lizama1} C. Lizama and
J. G. Mesquita perfectly generalized the notion of almost automorphy by
studying of almost automorphic solutions of dynamic equations on time scales
that are invariant under translation. In \cite{Castillo}, S. Castillo and M.
Pinto studied almost automorphic solutions of the system with constant
coefficient matrix $A$%
\[
y\left(  n+1\right)  =Ay\left(  n\right)  +f\left(  n\right)
\]
using $\left(  \mu_{1},\mu_{2}\right)  $-exponential dichotomy. In
\cite{Mishra} I. Mishra et al. investigated almost automorphic solutions to
functional differential equation%
\begin{equation}
\frac{d}{dt}\left(  x\left(  t\right)  -F_{1}\left(  t,x\left(  t-g\left(
t\right)  \right)  \right)  \right)  =A\left(  t\right)  x\left(  t\right)
+F_{2}\left(  t,x\left(  t\right)  ,x\left(  t-g\left(  t\right)  \right)
\right)  \label{cont_eq}%
\end{equation}
using the theory of evolution semigroup. Note that almost periodic solutions
of Eq. (\ref{cont_eq}) have also been studied in \cite{Abbas1} by means of the
theory of evolution semigroup.

In the present work, we propose some existence results for almost automorphic
solutions of the discrete neutral delayed system%
\begin{equation}
x(t+1)=A(t)x(t)+\Delta Q(t,x(t-g(t)))+G(t,x(t),x(t-g(t))) \label{system 0}%
\end{equation}
by using fixed point theory. The highlights of the paper can be summarized as follows:

\begin{itemize}
\item In our analysis, we prefer using exponential dichotomy instead of theory
of evolution semigroup since the conditions required by theory of evolution
are strict and not easy to check (for regarding discussion see \cite{Chen}). We prove uniqueness of projector of discrete exponential dichotomy. This result has a wide application potential in theory of difference equations.

\item In \cite[Relations (3.10) and (3.11)]{Lizama2}, the authors obtain the
limiting properties of exponential dichotomy by using the product integral on
discrete domain (see \cite[Section 4]{Slavik} and \cite[Section 4]{Lizama1}).
This method requires boundedness of inverse matrix $A(t)^{-1}$ as a compulsory
condition. Different than \cite{Lizama2}, we obtain our limit results
without assuming boundedness of the inverse matrix $A(t)^{-1}$ (see Theorem
\ref{thm projection unique}).

\item Using a different approach we improve the existence results
\cite[Theorem 3.1 and Theorem 4.3]{Lizama2} (see Example \ref{ex comparison})
and extend the results of \cite{Castillo} to the systems with nonconstant
coefficient matrix $A\left(  t\right)  $. Two examples are given to illustrate
the effectiveness of our results.
\end{itemize}

The latter part of the paper is organized as follows: In the next section, we
give basic definitions and properties of discrete almost automorphic functions
and prove our limit results regarding discrete exponential dichotomy. In the
final section, we propose some sufficient conditions for the existence of
almost automorphic and almost periodic solutions of the system (\ref{system 0}%
) by means of Krasnoselskii's fixed point theorem.

\section{Almost automorphic functions and exponential dichotomy}

Let $\mathcal{X}$ be a (real or complex) Banach space endowed with the norm
$\left\Vert .\right\Vert _{\mathcal{X}}$ and $\mathcal{B}(\mathcal{X})$ is a
Banach space of all bounded linear operators from $\mathcal{X}$ to
$\mathcal{X}$ with the norm $\left\Vert .\right\Vert _{\mathcal{B}%
(\mathcal{X})}$ given by%
\[
\left\Vert L\right\Vert _{\mathcal{B}(\mathcal{X})}:=\sup\left\{  \left\Vert
Lx\right\Vert _{\mathcal{X}}:x\in\mathcal{X}\text{ and }\left\Vert
x\right\Vert _{\mathcal{X}}\leq1\right\}  .
\]

Following definitions and results can be found in \cite{Araya} and
\cite{Lizama2}.

\begin{definition}
A function $f:\mathbb{Z\rightarrow}\mathcal{X}$ is said to be discrete almost
automorphic if for every integer sequence $\left\{  k_{n}^{\prime}\right\}
_{n\in\mathbb{Z}_{+}}$ there exists a subsequence $\left\{  k_{n}\right\}
_{n\in\mathbb{Z}_{+}}$ such that%
\begin{equation}
\lim_{n\rightarrow\infty}f(t+k_{n})=:\bar{f}(t) \label{fbar1}%
\end{equation}
is well defined for each $t\in\mathbb{Z}$ and%
\begin{equation}
\lim_{n\rightarrow\infty}\bar{f}(t-k_{n})=f(t) \label{fbar2}%
\end{equation}
for each $t\in\mathbb{Z}$.
\end{definition}

\begin{definition}
A function $g:\mathbb{Z\times}\mathcal{X}\mathbb{\rightarrow}\mathcal{X}$ of
two variables is said to be discrete almost automorphic in $t\in\mathbb{Z}$
for each $x\in\mathcal{X},$ if for every integer sequence $\left\{
k_{n}^{\prime}\right\}  _{n\in\mathbb{Z}_{+}},$ there exists a subsequence
$\left\{  k_{n}\right\}  _{n\in\mathbb{Z}_{+}}$ such that%
\[
\lim_{n\rightarrow\infty}g(t+k_{n},x)=:\bar{g}(t,x)
\]
is well defined for each $t\in\mathbb{Z}$, $x\in\mathcal{X}$ and
\[
\lim_{n\rightarrow\infty}\bar{g}(t-k_{n},x)=:g(t,x)
\]
for each $t\in\mathbb{Z}$ and $x\in\mathcal{X}.$
\end{definition}

Throughout the paper, $\mathcal{A}(\mathcal{X})$ represents the set of
discrete almost automorphic functions taking values on $\mathcal{X}$. Notice
that $\mathcal{A}\left(  \mathcal{X}\right)  $ is a Banach space endowed by
the norm%
\[
\left\Vert f\right\Vert _{\mathcal{A}(\mathcal{X})}:=\sup_{t\in\mathbb{Z}%
}\left\Vert f(t)\right\Vert _{\mathcal{X}}.
\]
Some properties of discrete almost automorphic functions are listed in the
following theorems:

\begin{theorem}
\cite{Araya}\label{thm properties of almost automorphic} Let $f_{1}%
,f_{2}:\mathbb{Z\rightarrow}\mathcal{X}$ and $g_{1},g_{2}:\mathbb{Z\times
}\mathcal{X}\mathbb{\rightarrow}\mathcal{X}$ be discrete almost automorphic
functions in $t\in\mathbb{Z},$ then

\begin{description}
\item[i.] $f_{1}+f_{2}$ is discrete almost automorphic

\item[ii.] $g_{1}+g_{2}$ is discrete almost automorphic in $t$ for each
$x\in\mathcal{X}$

\item[iii.] $cf_{1}$ is discrete almost automorphic for every scalar $c$

\item[iv.] For every scalar $c,$ $cg_{1}$ is discrete almost automorphic in $t$
for each $x\in\mathcal{X}$

\item[v.] For each fixed $k\in\mathbb{Z}$, the function $f_{1}\left(
t+k\right)  $ is discrete almost automorphic

\item[vi.] The function $\hat{f}_{1}:\mathbb{Z\rightarrow}\mathcal{X}$ defined by $\hat{f}_{1}(t):=f_{1}(-t)$ is discrete almost automorphic

\item[vii.] $\sup_{t\in\mathbb{Z}}\left\Vert f_{1}(t)\right\Vert _{\mathcal{X}%
}<\infty$ for each $t\in$ $\mathbb{Z}$

\item[viii.] $\sup_{t\in\mathbb{Z}}\left\Vert g_{1}(t,x)\right\Vert
_{\mathcal{X}}<\infty$ for each $t\in$ $\mathbb{Z}$ and $x\in\mathcal{X}$

\item[ix.] $\sup_{t\in\mathbb{Z}}\left\Vert \bar{f}_{1}(t)\right\Vert
_{\mathcal{X}}\leq\sup_{t\in\mathbb{Z}}\left\Vert f_{1}(t)\right\Vert
_{\mathcal{X}}$ for all $t\in$ $\mathbb{Z}$ where $\bar{f}_{1}$ is defined as in
(\ref{fbar1})

\item[x.] $\sup_{t\in\mathbb{Z}}\left\Vert \bar{g}_{1}(t,x)\right\Vert
_{\mathcal{X}}<\infty$ for each $t\in$ $\mathbb{Z}$ and $x\in\mathcal{X}.$
\end{description}
\end{theorem}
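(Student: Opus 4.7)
The plan is to dispose of the ten conclusions in four groups organized by technique, rather than addressing them one-by-one in order. Conclusions (iii), (iv), and (v) are essentially immediate from the definition: for (iii) and (iv), the same subsequence $\{k_n\}$ that works for $f_1$ (resp.\ $g_1$) works for $cf_1$ (resp.\ $cg_1$) by continuity of scalar multiplication, and the two limits (\ref{fbar1})--(\ref{fbar2}) transfer; for (v) one extracts a subsequence for $f_1$ and then observes that $f_1((t+k)+k_n)\to\bar f_1(t+k)$ after relabelling the translation parameter $t\mapsto t+k$.

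For (i) and (ii) the idea is successive extraction. Given an arbitrary integer sequence $\{k_n'\}$, I would first extract a subsequence $\{k_n''\}$ along which both (\ref{fbar1}) and (\ref{fbar2}) hold for $f_1$, then extract from $\{k_n''\}$ a further subsequence $\{k_n\}$ along which they hold for $f_2$. The common subsequence then yields $\bar f_1(t)+\bar f_2(t)$ as the required limit for $f_1+f_2$, and analogously for the reverse limit; item (ii) is the same argument with $x\in\mathcal X$ held fixed. For (vi) I would feed the negated sequence into the hypothesis: given $\{k_n'\}$, apply the definition of almost automorphy of $f_1$ to $\{-k_n'\}$ to obtain a subsequence $\{-k_n\}$ with $f_1(s-k_n)\to\bar f_1(s)$ and $\bar f_1(s+k_n)\to f_1(s)$ for each $s\in\mathbb Z$. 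Substituting $s=-t$ gives $\hat f_1(t+k_n)=f_1(-t-k_n)\to\bar f_1(-t)=:\hat{\bar f}_1(t)$ and the companion limit $\hat{\bar f}_1(t-k_n)\to\hat f_1(t)$, so $\hat f_1\in\mathcal A(\mathcal X)$.

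The remaining boundedness assertions (vii)--(x) I would handle together. For (vii), argue by contradiction: if $\sup_{t\in\mathbb Z}\|f_1(t)\|_{\mathcal X}=\infty$, pick $t_n\in\mathbb Z$ with $\|f_1(t_n)\|_{\mathcal X}\to\infty$ and apply the definition to $k_n':=t_n$ at the point $t=0$; the existence of any subsequence $\{k_n\}$ along which $f_1(k_n)$ converges contradicts $\|f_1(k_n)\|_{\mathcal X}\to\infty$. Item (viii) is identical with $x\in\mathcal X$ fixed. For (ix), continuity of the norm gives $\|\bar f_1(t)\|_{\mathcal X}=\lim_n\|f_1(t+k_n)\|_{\mathcal X}\leq\sup_s\|f_1(s)\|_{\mathcal X}$, and taking $\sup$ over $t$ preserves the bound; item (x) combines (viii) applied to $g_1(\cdot,x)$ with the same norm-continuity step.

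No single step is genuinely hard; the only points demanding care are that the extractions in (i)--(ii) must be performed sequentially (not simultaneously) so that the second subsequence retains the convergence established for the first, and that in (vi) one must correctly track the sign change through both limits (\ref{fbar1})--(\ref{fbar2}) under the variable swap $t\mapsto-t$. Apart from the definition of almost automorphy itself, the proof uses nothing beyond continuity of the norm and standard subsequence arguments.
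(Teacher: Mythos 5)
Your proposal is correct: the nested-subsequence argument for (i)--(ii), the sign-tracking for (vi), and the contradiction/norm-continuity arguments for (vii)--(x) are all sound and are the standard proofs of these facts. The paper itself states this theorem without proof, citing \cite{Araya}, and your arguments coincide with the ones given there.
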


\begin{theorem}
\label{thmlipschitz} \cite{Araya} Let $g:\mathbb{Z\times}\mathcal{X}%
\mathbb{\rightarrow}\mathcal{X}$ be discrete almost automorphic in
$t\in\mathbb{Z},$ for each $x\in\mathcal{X}$ satisfying Lipschitz condition in
$x$ uniformly in $t,$ that is%
\[
\left\Vert g(t,x)-g(t,y)\right\Vert _{\mathcal{A}(\mathcal{X})}\leq
L\left\Vert x-y\right\Vert _{\mathcal{X}},\text{ }\forall x,y\in\mathcal{X}.
\]
Suppose $\varphi:\mathbb{Z\rightarrow}\mathcal{X}$ is discrete almost
automorphic function, then the function $g\left(  t,\varphi\left(  t\right)
\right)  $ is discrete almost automorphic.
\end{theorem}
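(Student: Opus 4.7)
The plan is a Bochner-type diagonal argument that uses the Lipschitz condition on $g$ to absorb the fluctuation of $\varphi$ inside the first argument of $g$. Given an arbitrary integer sequence $\{k_n'\}_{n\in\mathbb{Z}_+}$, my goal is to extract a single subsequence $\{k_n\}$ along which both $g(t+k_n,\varphi(t+k_n))$ and the back-translated sequence converge in the two-sided sense required by the definition of discrete almost automorphy.

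First I would invoke the discrete almost automorphy of $\varphi$ to pass to a subsequence $\{k_n^{(0)}\}$ of $\{k_n'\}$ with $\varphi(t+k_n^{(0)})\to \bar\varphi(t)$ and $\bar\varphi(t-k_n^{(0)})\to\varphi(t)$ pointwise in $t\in\mathbb{Z}$. Since $\mathbb{Z}$ is countable, the set $S:=\{\varphi(t):t\in\mathbb{Z}\}\cup\{\bar\varphi(t):t\in\mathbb{Z}\}$ is countable; enumerate it as $\{x_1,x_2,\dots\}$. I then perform a standard diagonal refinement: at stage $i$ apply the almost automorphy of $g(\cdot,x_i)$ (in $t$, for the fixed argument $x_i$) to the previous subsequence $\{k_n^{(i-1)}\}$, producing a further subsequence $\{k_n^{(i)}\}$ such that both $g(s+k_n^{(i)},x_i)\to \bar g(s,x_i)$ and $\bar g(s-k_n^{(i)},x_i)\to g(s,x_i)$ hold for every $s\in\mathbb{Z}$. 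The diagonal sequence $k_n:=k_n^{(n)}$ then enjoys both limits simultaneously for every $x\in S$ and every $s\in\mathbb{Z}$, while still being a subsequence of $\{k_n^{(0)}\}$.

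With the candidate limit $h(t):=\bar g(t,\bar\varphi(t))$, the forward convergence $g(t+k_n,\varphi(t+k_n))\to h(t)$ follows from the split
\[
\|g(t+k_n,\varphi(t+k_n))-h(t)\|_{\mathcal X}\le L\,\|\varphi(t+k_n)-\bar\varphi(t)\|_{\mathcal X}+\|g(t+k_n,\bar\varphi(t))-\bar g(t,\bar\varphi(t))\|_{\mathcal X},
\]
where the first term vanishes by almost automorphy of $\varphi$ and the second by construction, using $\bar\varphi(t)\in S$. For the reverse limit $h(t-k_n)\to g(t,\varphi(t))$, I would first note that $\bar g$ inherits the Lipschitz constant $L$ in its second variable (by passing to limits in the hypothesis), and then use the analogous estimate
\[
\|\bar g(t-k_n,\bar\varphi(t-k_n))-g(t,\varphi(t))\|_{\mathcal X}\le L\,\|\bar\varphi(t-k_n)-\varphi(t)\|_{\mathcal X}+\|\bar g(t-k_n,\varphi(t))-g(t,\varphi(t))\|_{\mathcal X},
\]
where both terms tend to zero: the first by the second half of almost automorphy of $\varphi$, and the second by construction since $\varphi(t)\in S$.

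The main technical obstacle, as usual for superposition results of this flavor, is arranging the diagonalization so that a single subsequence simultaneously witnesses the two-sided almost automorphy of $g(\cdot,x)$ for all countably many $x$ that we care about; this is where the pointwise (rather than uniform) nature of almost automorphy and the countability of $\mathbb{Z}$ are essential. The Lipschitz hypothesis is what lets us replace the moving arguments $\varphi(t+k_n)$ and $\bar\varphi(t-k_n)$ by their limits inside $g$ and $\bar g$ with vanishing error, thereby reducing the problem to almost automorphy of $g$ at a fixed argument.
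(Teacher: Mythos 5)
The paper states this theorem only as a citation to \cite{Araya} and supplies no proof of its own; your argument is correct and is essentially the standard Bochner-type superposition proof used there: pass to one subsequence that works simultaneously for $\varphi$ and for $g(\cdot,x)$ at the relevant arguments, set $h(t)=\bar g(t,\bar\varphi(t))$, and control both directions by splitting off the moving second argument with the Lipschitz bound (noting that $\bar g$ inherits the constant $L$). The only remark worth adding is that your diagonalization over the countable set $S$ is not strictly required under the paper's definition of almost automorphy in $t$ for each $x$, which already provides a single subsequence along which $g(t+k_n,x)\to\bar g(t,x)$ and $\bar g(t-k_n,x)\to g(t,x)$ for \emph{all} $x\in\mathcal{X}$ at once; your extra care is harmless and makes the argument robust under the weaker per-$x$ reading of that definition.
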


\begin{definition}
[Discrete exponential dichotomy]\label{def exp dichotomy} Let $X(t)$ be the
principal fundamental matrix solution of the linear homogeneous system%
\begin{equation}
x(t+1)=A(t)x(t),\text{ }x(t_{0})=x_{0}. \label{1}%
\end{equation}
Then (\ref{1}) is said to admit an exponential dichotomy if there exist a
projection $P$ and positive constants $\alpha_{1},\alpha_{2},\beta_{1}$ and
$\beta_{2}$ such that%
\begin{align}
\left\Vert X(t)PX^{-1}(s)\right\Vert _{\mathcal{B}(\mathcal{X})}  &  \leq
\beta_{1}\left(  1+\alpha_{1}\right)  ^{s-t},\text{ }t\geq s,\label{2}\\
\left\Vert X(t)\left(  I-P\right)  X^{-1}(s)\right\Vert _{\mathcal{B}%
(\mathcal{X})}  &  \leq\beta_{2}\left(  1+\alpha_{2}\right)  ^{t-s},\text{
}s\geq t. \label{3}%
\end{align}

\end{definition}

\begin{remark}
Notice that in \cite{Bayliss} and \cite{Lizama2}, the discrete exponential
dichotomy is defined by using the exponential function $\exp\left(
\alpha\left(  s-t\right)  \right)  $ instead of the discrete exponential
function $e_{\alpha}(t,s)=\left(  1+\alpha\right)  ^{s-t}$ (satisfying
$\Delta_{t}e_{\alpha}(t,s)=\alpha e_{\alpha}(t,s)$) in (\ref{2}) and
(\ref{3}), respectively. For convenience we prefer using Definition
\ref{def exp dichotomy} which is evidently equivalent to \cite[Definition
2.11]{Lizama2}. Notice that Definition \ref{def exp dichotomy} is also
consistent with the unified version of exponential dichotomy (see
\cite[Definition 2.12]{Lizama1}) which covers both the discrete and continuous
cases.\ For more reading on exponential dichotomy we may refer to
\cite{Coppel}
\end{remark}

We prove the following lemmas for further use in our analysis:

\begin{lemma}
\label{Lem1}Let $\varphi:\mathbb{Z\rightarrow(}0,\infty\mathbb{)}$ and
$\psi:\mathbb{Z\rightarrow(}0,\infty\mathbb{)}$ be two functions satisfying
\begin{equation}
\varphi(t)%
{\displaystyle\sum\limits_{j=-\infty}^{t-1}}
\varphi(j)^{-1}\leq\mu,\text{ }t\in\mathbb{Z}, \label{4}%
\end{equation}%
\begin{equation}
\psi(t)%
{\displaystyle\sum\limits_{j=t}^{\infty}}
\psi(j)^{-1}\leq\gamma,\text{ }t\in\mathbb{Z}, \label{5}%
\end{equation}
for some constants $\mu>0$ and $\gamma>0$. Then for any $t_{0}\in\mathbb{Z}$,
there exist positive constants $c$ and $\widetilde{c}$ such that%
\[
\varphi(t)\leq c\left(  1+\mu^{-1}\right)  ^{t_{0}-t}\text{ for }t\geq t_{0}%
\]
and%
\[
\psi(t)\leq\widetilde{c}\left(  1+\gamma^{-1}\right)  ^{t-t_{0}}\text{ for
}t\leq t_{0}.
\]

\end{lemma}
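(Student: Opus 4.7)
The plan is to reduce each of the two inequalities to a geometric growth estimate for a suitable cumulative sum, and then invert to obtain a pointwise bound on $\varphi$ or $\psi$.

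For the first estimate I would introduce $S(t):=\sum_{j=-\infty}^{t-1}\varphi(j)^{-1}$. This is finite for every $t$ by (\ref{4}) and strictly positive because $\varphi(t-1)^{-1}>0$ is one of its terms. Hypothesis (\ref{4}) rewrites as $\varphi(t)^{-1}\geq S(t)/\mu$, and combined with the one-step identity $S(t+1)=S(t)+\varphi(t)^{-1}$ it yields the geometric recurrence $S(t+1)\geq (1+\mu^{-1})S(t)$. Iterating for $t\geq t_0$ gives $S(t)\geq (1+\mu^{-1})^{t-t_0}S(t_0)$, and another application of (\ref{4}) produces $\varphi(t)\leq \mu/S(t)\leq (\mu/S(t_0))(1+\mu^{-1})^{t_0-t}$, so the constant $c:=\mu/S(t_0)$ works.

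For the second estimate I would mirror the argument with the tail sum $T(t):=\sum_{j=t}^{\infty}\psi(j)^{-1}$, which is finite by (\ref{5}). The hypothesis becomes $\psi(t)^{-1}\geq T(t)/\gamma$, and the telescoping identity $T(t)-T(t+1)=\psi(t)^{-1}$ gives $T(t+1)\leq (1-\gamma^{-1})T(t)$. Note first that the hypothesis forces $\gamma>1$ automatically, since $T(t)>\psi(t)^{-1}$ already yields $\psi(t)T(t)>1$; this is what makes the recurrence meaningful. Rearranging, $T(t)\geq \gamma/(\gamma-1)\cdot T(t+1)\geq (1+\gamma^{-1})T(t+1)$, where the last inequality uses $\gamma/(\gamma-1)=1+1/(\gamma-1)\geq 1+\gamma^{-1}$. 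Iterating backward from $t_0$ gives $T(t)\geq (1+\gamma^{-1})^{t_0-t}T(t_0)$ for $t\leq t_0$, and (\ref{5}) then yields $\psi(t)\leq \gamma/T(t)\leq (\gamma/T(t_0))(1+\gamma^{-1})^{t-t_0}$, so $\widetilde{c}:=\gamma/T(t_0)$ works.

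The main subtlety is minor bookkeeping rather than a real obstacle: one must notice that the hypothesis for $\psi$ automatically forces $\gamma>1$, and one must be willing to relax the sharper decay rate $1-\gamma^{-1}$ to the slightly weaker but symmetric rate $1/(1+\gamma^{-1})$ in order to match the form stated in the lemma. Once the reduction to the monotone cumulative sums $S$ and $T$ is in place, both halves reduce to a one-step geometric recurrence followed by routine iteration.
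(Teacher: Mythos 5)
Your proof is correct and follows essentially the same route as the paper: both arguments introduce the cumulative sum $u(t)=\sum_{j=-\infty}^{t-1}\varphi(j)^{-1}$ and the tail sum $v(t)=\sum_{j=t}^{\infty}\psi(j)^{-1}$, convert each hypothesis into a one-step geometric recurrence, iterate, and invert, arriving at the same constants $c=\mu\,u(t_0)^{-1}$ and $\widetilde{c}=\gamma\,v(t_0)^{-1}$. The only cosmetic difference is in the second half, where the paper uses the weakened bound $\psi(t)^{-1}\geq v(t+1)/\gamma$ to reach the rate $(1+\gamma^{-1})^{-1}$ directly, whereas you first derive the sharper rate $1-\gamma^{-1}$ (after correctly observing that the hypothesis forces $\gamma>1$) and then relax it.
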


\begin{proof}
Define the function
\[
u(t):=%
{\displaystyle\sum\limits_{j=-\infty}^{t-1}}
\left(  \varphi(j)\right)  ^{-1}%
\]
with $\Delta u(t)=\left(  \varphi(t)\right)  ^{-1}$, where $\Delta$ is the
forward difference operator. By (\ref{4}), we have%
\[
u(t)\leq\mu\Delta u(t),
\]
and hence,
\[
u(t)\geq\left(  1+\mu^{-1}\right)  ^{t-t_{0}}u\left(  t_{0}\right)
\text{,\ for\ }t\geq t_{0}.
\]
This implies%
\begin{align*}
\varphi(t)  &  \leq\mu u(t)^{-1}\\
&  \leq\mu u\left(  t_{0}\right)  ^{-1}\left(  1+\mu^{-1}\right)  ^{t_{0}-t}\\
&  \leq c\left(  1+\mu^{-1}\right)  ^{t_{0}-t}%
\end{align*}
for $c=\mu u\left(  t_{0}\right)  ^{-1}$ and $t\geq t_{0}$. Similarly, the
function
\[
v(t):=%
{\displaystyle\sum\limits_{j=t}^{\infty}}
\psi(j)^{-1},
\]
with $\Delta v(t)=-\psi(t)^{-1}$, satisfies
\[
\gamma\Delta v(t)\leq-v(t+1).
\]
Solving the last inequality for $t_{0}\geq t,$ we get%
\[
v\left(  t_{0}\right)  -v(t)\left(  1+\gamma^{-1}\right)  ^{t-t_{0}}%
\leq0\text{.}%
\]
By using (\ref{5}), we obtain%
\begin{align*}
\psi(t)  &  \leq\gamma v(t)^{-1}\\
&  \leq\gamma v\left(  t_{0}\right)  ^{-1}\left(  1+\gamma^{-1}\right)
^{t-t_{0}}\\
&  \leq\widetilde{c}\left(  1+\gamma^{-1}\right)  ^{t-t_{0}}%
\end{align*}
for $\widetilde{c}=\gamma v\left(  t_{0}\right)  ^{-1}$ and $t_{0}\geq t$. The
proof is complete.
\end{proof}

\begin{lemma}
\label{Lem2}If the system (\ref{1}) admits an exponential dichotomy, then
$x=0$ is the unique bounded solution of the system (\ref{1}).
\end{lemma}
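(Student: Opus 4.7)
The plan is to fix a bounded solution $x(t)$ of (\ref{1}) with $\sup_{t\in\mathbb{Z}}\Vert x(t)\Vert_{\mathcal{X}}\leq M$ and split it along the dichotomy projection. Since every solution can be written as $x(t)=X(t)X^{-1}(t_{0})x(t_{0})$, I would set
\[
x_{s}(t):=X(t)PX^{-1}(t_{0})x(t_{0}),\qquad x_{u}(t):=X(t)(I-P)X^{-1}(t_{0})x(t_{0}),
\]
so that $x(t)=x_{s}(t)+x_{u}(t)$. By (\ref{2}) with $s=t_{0}$ one immediately gets $x_{s}(t)\to 0$ as $t\to\infty$, and by (\ref{3}) with $s=t_{0}$ one gets $x_{u}(t)\to 0$ as $t\to-\infty$; combining these with $\Vert x(t)\Vert_{\mathcal{X}}\leq M$ shows that both $x_{s}$ and $x_{u}$ are in fact bounded on all of $\mathbb{Z}$.

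The key step is a semigroup-type identity that uses $P^{2}=P$ and $(I-P)^{2}=I-P$. Since $X^{-1}(s)x_{s}(s)=PX^{-1}(t_{0})x(t_{0})$, one has
\[
X(t)PX^{-1}(s)x_{s}(s)=X(t)P^{2}X^{-1}(t_{0})x(t_{0})=x_{s}(t)
\]
for every $s,t\in\mathbb{Z}$, and likewise $X(t)(I-P)X^{-1}(s)x_{u}(s)=x_{u}(t)$. Now I would apply (\ref{2}) with $t\geq s$ to obtain
\[
\Vert x_{s}(t)\Vert_{\mathcal{X}}\leq \beta_{1}(1+\alpha_{1})^{s-t}\Vert x_{s}(s)\Vert_{\mathcal{X}},
\]
let $s\to -\infty$ with $t$ fixed, and use the boundedness of $x_{s}$ to conclude $x_{s}(t)=0$. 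The same trick applied to $x_{u}$, using (\ref{3}) with $s\geq t$ and $s\to\infty$, yields $x_{u}(t)=0$. Hence $x(t)=x_{s}(t)+x_{u}(t)=0$ for all $t\in\mathbb{Z}$.

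The main obstacle I anticipate is precisely spotting this semigroup identity: the naive bounds from (\ref{2}) and (\ref{3}) only control the direction $t\to\infty$ on the stable part and $t\to-\infty$ on the unstable part, so one might worry the argument is incomplete. The trick is that once $x_{s}$ and $x_{u}$ are known to be bounded on \emph{all} of $\mathbb{Z}$, the $P^{2}=P$ identity lets us re-center the dichotomy inequality at an arbitrary initial point $s$ and push $s$ to whichever infinity makes the exponential factor vanish. Everything else is a short computation.
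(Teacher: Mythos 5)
Your proof is correct, and it reaches the conclusion by a genuinely different mechanism than the paper's. Both arguments split the solution along the projection, $x=x_{s}+x_{u}$ with $x_{s}(t)=X(t)P\xi$ and $x_{u}(t)=X(t)(I-P)\xi$ (note $X(t_{0})=I$, so your factor $X^{-1}(t_{0})$ is the identity), and both need the preliminary observation that boundedness of $x$ forces each summand to be bounded on all of $\mathbb{Z}$ --- a point you make explicit and the paper leaves implicit. Where you differ is in how each summand is then killed. You use the classical re-centering trick: from $P^{2}=P$ one gets $x_{s}(t)=X(t)PX^{-1}(s)x_{s}(s)$ for every $s$, so (\ref{2}) yields $\Vert x_{s}(t)\Vert_{\mathcal{X}}\leq\beta_{1}(1+\alpha_{1})^{s-t}\Vert x_{s}(s)\Vert_{\mathcal{X}}$ for $t\geq s$, and sending $s\rightarrow-\infty$ against the uniform bound on $x_{s}$ annihilates $x_{s}(t)$; symmetrically for $x_{u}$ with $s\rightarrow+\infty$. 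The paper instead argues by contradiction: assuming $(I-P)\xi\neq0$, it sets $\phi(t)^{-1}:=\Vert X(t)(I-P)\xi\Vert$, uses the same idempotency identity to derive the summation inequality $\sum_{j=t}^{\infty}\phi(j)\leq\phi(t)\,\beta_{2}(1+\alpha_{2})/\alpha_{2}$, and concludes from convergence of the series that $\liminf_{j}\phi(j)=0$, i.e. that $\Vert X(t)(I-P)\xi\Vert$ is unbounded (and dually for $P\xi$). The two routes consume the same ingredients ($P^{2}=P$, $(I-P)^{2}=I-P$, and the dichotomy estimates), but yours is a direct vanishing argument while the paper's is a summability/liminf contradiction, presumably chosen to match the machinery of their Lemma \ref{Lem1}. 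Both are complete; yours is the more standard and, to my eye, the more transparent of the two.
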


\begin{proof}
Let $B_{0}$ be set of initial conditions $\xi$ belonging to bounded solutions
of (\ref{1}). Assume $\left(  I-P\right)  \xi\neq0$ and define $\phi
(t)^{-1}:=\left\Vert X(t)\left(  I-P\right)  \xi\right\Vert _{\mathcal{B}%
(\mathcal{X})}$. Using the equality $\left(  I-P\right)  ^{2}=I-P$ we get%
\[%
{\displaystyle\sum\limits_{j=t}^{\infty}}
X(t)\left(  I-P\right)  \xi\phi(j)=%
{\displaystyle\sum\limits_{j=t}^{\infty}}
X(t)\left(  I-P\right)  X^{-1}\left(  j\right)  X(j)\left(  I-P\right)
\xi\phi(j).
\]
Taking the norm of both sides, we obtain%
\begin{align*}
\phi(t)^{-1}%
{\displaystyle\sum\limits_{j=t}^{\infty}}
\phi(j)  &  \leq%
{\displaystyle\sum\limits_{j=t}^{\infty}}
\left\Vert X(t)\left(  I-P\right)  X^{-1}\left(  j\right)  \right\Vert
_{\mathcal{B}(\mathcal{X})}\phi^{-1}(j)\phi(j)\\
&  \leq%
{\displaystyle\sum\limits_{j=t}^{\infty}}
\beta_{2}\left(  1+\alpha_{2}\right)  ^{t-j}\\
&  =%
{\displaystyle\sum\limits_{j=0}^{\infty}}
\beta_{2}\left(  1+\alpha_{2}\right)  ^{-j}\\
&  =\beta_{2}\frac{1+\alpha_{2}}{\alpha_{2}}.
\end{align*}
This yields
\[%
{\displaystyle\sum\limits_{j=t}^{\infty}}
\phi(j)\leq\phi(t)\beta_{2}\frac{1+\alpha_{2}}{\alpha_{2}},
\]
uniformly in $t$. Hence, we get
\[
\lim\underset{j\in\left[  t,\infty\right)  }{\inf}\phi(j)=0.
\]
which means that $\left\Vert X(t)\left(  I-P\right)  \xi\right\Vert
_{\mathcal{B}(\mathcal{X})}$ have to be unbounded.

Similarly, if we assume that $P\xi\neq0$, define $\left(  \theta\left(
t\right)  \right)  ^{-1}=\left\Vert X(t)P\xi\right\Vert _{\mathcal{B}%
(\mathcal{X})}$, and repeat the above procedure, we get%
\[%
{\displaystyle\sum\limits_{j=-\infty}^{t-1}}
\theta(j)X(t)P\xi=%
{\displaystyle\sum\limits_{j=-\infty}^{t-1}}
X(t)PX^{-1}\left(  j\right)  X(j)P\xi\theta(j)
\]
since $P^{2}=P$. Taking the norm of both sides we obtain%
\begin{align*}
\left(  \theta\left(  t\right)  \right)  ^{-1}%
{\displaystyle\sum\limits_{j=-\infty}^{t-1}}
\theta(j)  &  =%
{\displaystyle\sum\limits_{j=-\infty}^{t-1}}
\left\Vert X(t)PX^{-1}\left(  j\right)  \right\Vert _{\mathcal{B}%
(\mathcal{X})}\theta^{-1}(j)\theta(j)\\
&  \leq%
{\displaystyle\sum\limits_{j=-\infty}^{t-1}}
\beta_{1}\left(  1+\alpha_{1}\right)  ^{j-t}\\
&  \leq\frac{\beta_{1}}{\alpha_{1}}%
\end{align*}
and%
\[%
{\displaystyle\sum\limits_{j=-\infty}^{t-1}}
\theta(j)\leq\theta\left(  t\right)  \frac{\beta_{1}}{\alpha_{1}}.
\]
This shows that
\[
\lim\underset{j\in\left[  -\infty,t-1\right)  }{\inf}\theta(j)=0
\]
and hence $\left\Vert X(t)P\xi\right\Vert _{\mathcal{B}(\mathcal{X})}$ must be
unbounded. Consequently, boundedness of a solution of the system (\ref{1}) is
possible only if $B_{0}=\left\{  0\right\}  $, which means, if $x\left(
t\right)  $ is a bounded solution of (\ref{1}), then $x(t)=0$. The proof is complete.
\end{proof}

\begin{theorem}
If the homogeneous system (\ref{1}) admits an exponential dichotomy, then the
projection $P$ of the exponential dichotomy is unique.
\end{theorem}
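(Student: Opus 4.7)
The plan is to prove that, for any projection $P$ arising from an exponential dichotomy of the system (\ref{1}), both $\mathrm{Range}(P)$ and $\mathrm{Ker}(P)$ are \emph{intrinsically} characterized by the dynamics of (\ref{1}), so that any two such projections $P_{1}$ and $P_{2}$ must coincide. Throughout, I fix a reference point $t_{0}\in\mathbb{Z}$ and let $X(t)$ denote the principal fundamental matrix with $X(t_{0})=I$; then the solution of (\ref{1}) with initial value $\xi$ at $t_{0}$ is exactly $X(t)\xi$.

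The first main step is to establish the characterization
\[
\mathrm{Range}(P)=\Big\{\xi\in\mathcal{X}:\sup_{t\geq t_{0}}\|X(t)\xi\|_{\mathcal{X}}<\infty\Big\}.
\]
The inclusion ``$\subseteq$'' is immediate from (\ref{2}) with $s=t_{0}$: if $\xi=P\xi$, then $X(t)\xi=X(t)PX^{-1}(t_{0})\xi$ is bounded (indeed, decays exponentially) for $t\geq t_{0}$. For ``$\supseteq$,'' suppose $X(t)\xi$ is bounded on $[t_{0},\infty)$. Writing $\xi=P\xi+(I-P)\xi$, I observe that $X(t)(I-P)\xi=X(t)\xi-X(t)P\xi$ is bounded on $[t_{0},\infty)$ as a difference of two bounded quantities, while (\ref{3}) with $s=t_{0}$ forces it to be bounded on $(-\infty,t_{0}]$ as well. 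Hence $X(t)(I-P)\xi$ is a bounded solution of (\ref{1}) on all of $\mathbb{Z}$, and Lemma \ref{Lem2} forces it to vanish identically; evaluating at $t=t_{0}$ gives $(I-P)\xi=0$, so $\xi\in\mathrm{Range}(P)$. A symmetric argument, using (\ref{3}) for the forward-bounded component and (\ref{2}) for the backward-bounded one, yields the companion characterization
\[
\mathrm{Ker}(P)=\mathrm{Range}(I-P)=\Big\{\xi\in\mathcal{X}:\sup_{t\leq t_{0}}\|X(t)\xi\|_{\mathcal{X}}<\infty\Big\}.
\]

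Since the right-hand sides of these two equalities depend only on $X(t)$ and not on the particular projection chosen, any two dichotomy projections $P_{1}$ and $P_{2}$ of (\ref{1}) share the same range and the same kernel; together with $P_{i}^{2}=P_{i}$ this forces $P_{1}=P_{2}$. I do not foresee any essential obstacle beyond cleanly combining the two one-sided estimates (\ref{2}) and (\ref{3}) on the two parts of $\xi$; the real engine of the proof is Lemma \ref{Lem2}, which collapses any globally bounded solution of (\ref{1}) to zero and thereby pins down the splitting $\mathcal{X}=\mathrm{Range}(P)\oplus\mathrm{Ker}(P)$ uniquely.
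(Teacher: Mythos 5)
Your proof is correct and rests on the same engine as the paper's: combining the forward estimate (\ref{2}) with the backward estimate (\ref{3}) to turn the ``wrong'' component of $\xi$ into a globally bounded solution, which Lemma \ref{Lem2} then forces to vanish. The paper applies this directly to the cross-terms $X(t)P(I-\tilde{P})\xi$ and $X(t)(I-P)\tilde{P}\xi$ to conclude $P=P\tilde{P}=\tilde{P}$, rather than packaging the argument as your intrinsic characterization of $\mathrm{Range}(P)$ and $\mathrm{Ker}(P)$ by bounded forward and backward solutions, but the two arguments are equivalent.
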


\begin{proof}
Suppose that the system (\ref{1}) admits an exponential dichotomy. At first,
we need to show that $\left\Vert X(t)P\right\Vert _{\mathcal{B}(\mathcal{X})}$
is bounded for $t\geq t_{0}$ and $\left\Vert X(t)\left(  I-P\right)
\right\Vert _{\mathcal{B}(\mathcal{X})}$ is bounded for $t\leq t_{0}.$ Define
the function $\varphi(t):=\left\Vert X(t)P\right\Vert _{\mathcal{B}%
(\mathcal{X})}$ and consider the following equality%
\[%
{\displaystyle\sum\limits_{j=-\infty}^{t-1}}
X(t)P\varphi(j)^{-1}=%
{\displaystyle\sum\limits_{j=-\infty}^{t-1}}
X(t)PX^{-1}(j)X(j)P\varphi(j)^{-1}.
\]
Taking the norm of both sides, we get%
\[
\varphi(t)%
{\displaystyle\sum\limits_{j=-\infty}^{t-1}}
\varphi(j)^{-1}\leq\frac{\beta_{1}}{\alpha_{1}}:=K.
\]
Employing Lemma \ref{Lem1}, there exists a positive constant $c$ such that%
\[
\varphi(t)\leq c\left(  1+K^{-1}\right)  ^{t_{0}-t}\text{ for }t\geq t_{0},
\]
which means $\left\Vert X(t)P\right\Vert _{\mathcal{B}(\mathcal{X})}$ is
bounded for $t\geq t_{0}$.

Performing the similar procedure for the function $\psi(t):=\left\Vert
X(t)\left(  I-P\right)  \right\Vert _{\mathcal{B}(\mathcal{X})}$ we get%
\[%
{\displaystyle\sum\limits_{j=t}^{\infty}}
X(t)\left(  I-P\right)  \left(  \psi(j)\right)  ^{-1}=%
{\displaystyle\sum\limits_{j=t}^{\infty}}
X(t)\left(  I-P\right)  X^{-1}(j)X(j)\left(  I-P\right)  \left(
\psi(j)\right)  ^{-1},
\]
which implies that%
\[
\psi(t)%
{\displaystyle\sum\limits_{j=t}^{\infty}}
\psi(j)^{-1}\leq\beta_{2}\frac{1+\alpha_{2}}{\alpha_{2}}:=\hat{K}.
\]
By Lemma \ref{Lem1}, we can find a constant $\widehat{c}>0$ such that%
\[
\psi(t)\leq\widehat{c}\left(  1+\hat{K}^{-1}\right)  ^{t-t_{0}}\text{ for
}t_{0}\geq t,
\]
This shows that $\left\Vert X(t)\left(  I-P\right)  \right\Vert _{\mathcal{B}%
(\mathcal{X})}$ is bounded for $t_{0}\geq t$.

Suppose that there exists another projection $\tilde{P}$ $\neq P$ of
exponential dichotomy of (\ref{1}). Using the similar arguments we may find
constants $N$ and $\widetilde{N}$ such that%
\[
\left\Vert X(t)\tilde{P}\right\Vert _{\mathcal{B}(\mathcal{X})}\leq N\text{
for }t\geq t_{0},
\]
and%
\[
\left\Vert X(t)\left(  I-\tilde{P}\right)  \right\Vert _{\mathcal{B}%
(\mathcal{X})}\leq\ \widetilde{N}\text{ for }t_{0}\geq t.
\]
Using (\ref{2}-\ref{3}), for any arbitrary nonzero vector $\xi,$ we get%
\begin{align*}
\left\Vert X(t)P\left(  I-\tilde{P}\right)  \xi\right\Vert _{\mathcal{B}%
(\mathcal{X})}  &  =\left\Vert X(t)PX^{-1}(t_{0})X(t_{0})\left(  I-\tilde
{P}\right)  \xi\right\Vert _{\mathcal{B}(\mathcal{X})}\\
&  \leq\left\Vert X(t)PX^{-1}(t_{0})\right\Vert _{\mathcal{B}(\mathcal{X}%
)}\left\Vert X(t_{0})\left(  I-\tilde{P}\right)  \xi\right\Vert _{\mathcal{B}%
(\mathcal{X})}\\
&  \leq\beta_{1}\left\Vert \left(  I-\tilde{P}\right)  \xi\right\Vert
_{\mathcal{X}}\text{ for }t\geq t_{0}%
\end{align*}
and%
\begin{align*}
\left\Vert X(t)P\left(  I-\tilde{P}\right)  \xi\right\Vert _{\mathcal{B}%
(\mathcal{X})}  &  =\left\Vert X(t)PX^{-1}(t)X(t)\left(  I-\tilde{P}\right)
X^{-1}(t_{0})X(t_{0})\left(  I-\tilde{P}\right)  \xi\right\Vert _{\mathcal{B}%
(\mathcal{X})}\\
&  \leq\left\Vert X(t)PX^{-1}(t)\right\Vert _{\mathcal{B}(\mathcal{X}%
)}\left\Vert X(t)\left(  I-\tilde{P}\right)  X^{-1}(t_{0})\right\Vert
_{\mathcal{B}(\mathcal{X})}\left\Vert X(t_{0})\left(  I-\tilde{P}\right)
\xi\right\Vert _{\mathcal{B}(\mathcal{X})}\\
&  \leq\beta_{1}\beta_{2}\left\Vert \left(  I-\tilde{P}\right)  \xi\right\Vert
_{\mathcal{X}}\text{ for }t_{0}\geq t
\end{align*}
since $X(t_{0})=I$. Then $x(t)=X(t)P(I-\tilde{P})\xi$ is bounded solution of
(\ref{1}). Observe that $x(t)=X(t)\left(  I-P\right)  \tilde{P}\xi$ is also a
bounded solution of (\ref{1}). Employing Lemma \ref{Lem2}, we get $x=0,$ and
hence, $P=P\tilde{P}=\tilde{P}$. The proof is complete.
\end{proof}

\begin{theorem}
\label{thm projection unique}Suppose that the system (\ref{1}) admits an
exponential dichotomy with the projection $P$ and the positive constants
$\alpha_{1},\alpha_{2}$, $\beta_{1}$, and $\beta_{2}$. Let the matrix valued
function $A(t)$ in (\ref{1}) be almost automorphic. That is, for any sequence
$\{\widetilde{\theta}_{k}\}_{k\in\mathbb{Z}_{+}}$ of integers there exists a
subsequence $\{\theta_{k}\}_{k\in\mathbb{Z}_{+}}$ such that%
\[
\lim_{k\rightarrow\infty}A(t+\theta_{k}):=\bar{A}(t)
\]
is well defined and%
\[
\lim_{k\rightarrow\infty}\bar{A}(t-\theta_{k})=A(t)
\]
for each $t\in\mathbb{Z}$. Then
\begin{equation}
\lim_{k\rightarrow\infty}X(t+\theta_{k})PX^{-1}(s+\theta_{k}):=\bar{X}%
(t)\bar{P}\bar{X}^{-1}(s)\text{ for }s\in(-\infty,t]\cap\mathbb{Z}
\label{forward limit1}%
\end{equation}
and%
\begin{equation}
\lim_{k\rightarrow\infty}X(t+\theta_{k})\left(  I-P\right)  X^{-1}%
(s+\theta_{k}):=\bar{X}(t)\left(  I-\bar{P}\right)  \bar{X}^{-1}(s)\text{ for
}s\in\lbrack t,\infty)\cap\mathbb{Z} \label{forward limit 2}%
\end{equation}
are well defined for each $t\in\mathbb{Z}$ and the limiting system%
\begin{equation}
x(t+1)=\bar{A}(t)x(t),\text{ }x(t_{0})=x_{0} \label{lim sys}%
\end{equation}
admits an exponential dichotomy with the projection $\bar{P}$ and the same
constants. Furthermore, for each $t\in\mathbb{Z}$ we have%
\begin{equation}
\lim_{k\rightarrow\infty}\bar{X}(t-\theta_{k})\bar{P}\bar{X}^{-1}(s-\theta
_{k})=X(t)PX^{-1}(s)\text{, \ }s\in(-\infty,t]\cap\mathbb{Z} \label{back1}%
\end{equation}
and%
\begin{equation}
\lim_{k\rightarrow\infty}\bar{X}(t-\theta_{k})\left(  I-\bar{P}\right)
\bar{X}^{-1}(s-\theta_{k})=X(t)\left(  I-P\right)  X^{-1}(s)\text{, }%
s\in\lbrack t,\infty)\cap\mathbb{Z}. \label{back2}%
\end{equation}

\end{theorem}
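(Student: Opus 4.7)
The plan is to combine Bolzano--Weierstrass with a Cantor diagonal argument, then invoke the uniqueness of the dichotomy projection just proved for the backward limits. Throughout, the key to avoiding any hypothesis on $A(t)^{-1}$ is to work only with the quantities $X(t+\theta_k)PX^{-1}(s+\theta_k)$ and $X(t+\theta_k)(I-P)X^{-1}(s+\theta_k)$, which are a priori bounded by the dichotomy, rather than with their individual unbounded factors. By (\ref{2})--(\ref{3}), for every fixed $(t,s)\in\mathbb{Z}^2$ these sequences are bounded in $\mathcal{B}(\mathcal{X})$ by $\beta_1(1+\alpha_1)^{s-t}$ (on $s\leq t$) and $\beta_2(1+\alpha_2)^{t-s}$ (on $s\geq t$), uniformly in $k$. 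Since $\mathbb{Z}^2$ is countable and $\mathcal{B}(\mathcal{X})$ is finite dimensional, a standard diagonal extraction refines $\{\theta_k\}$ to a subsequence along which both pointwise limits, call them $U(t,s)$ and $V(t,s)$, exist and inherit the same exponential bounds.

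To identify the limits, take $t_0 = 0$ without loss of generality and set $\bar{P} := U(0,0) = \lim_k X(\theta_k)PX^{-1}(\theta_k)$, which is a projection as a limit of projections in finite dimension. The elementary identities
\[
X(t+\theta_k+1)PX^{-1}(s+\theta_k) = A(t+\theta_k)\,X(t+\theta_k)PX^{-1}(s+\theta_k),
\]
\[
X(t+\theta_k)PX^{-1}(s-1+\theta_k) = X(t+\theta_k)PX^{-1}(s+\theta_k)\,A(s-1+\theta_k),
\]
together with $A(t+\theta_k)\to\bar{A}(t)$, pass to the limit and yield $U(t+1,s) = \bar{A}(t)U(t,s)$, $U(t,s-1) = U(t,s)\bar{A}(s-1)$, and analogous recursions for $V$. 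These, combined with the complementarity $U(t,t)+V(t,t) = I$ and $U(t,t)V(t,t) = 0$ obtained by passing $X(t+\theta_k)[P+(I-P)]X^{-1}(t+\theta_k)=I$ and $X(t+\theta_k)P(I-P)X^{-1}(t+\theta_k)=0$ to the limit, identify $U(t,s) = \bar{X}(t)\bar{P}\bar{X}^{-1}(s)$ for $s \leq t$ and $V(t,s) = \bar{X}(t)(I-\bar{P})\bar{X}^{-1}(s)$ for $s \geq t$, where $\bar{X}$ is the principal fundamental matrix of (\ref{lim sys}); the inherited exponential bounds then furnish the dichotomy of (\ref{lim sys}) with projection $\bar{P}$ and the same constants.

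For the backward limits (\ref{back1})--(\ref{back2}), I would re-run the same extraction-and-recursion scheme on the limiting system with integer shifts $-\theta_k$, using the back-convergence $\bar{A}(t-\theta_k)\to A(t)$ guaranteed by almost automorphicity of $A$. This produces an exponential dichotomy of the twice-limited system --- which is (\ref{1}) itself --- with some projection $P'$ and the same constants $\alpha_i,\beta_i$; the uniqueness-of-projection theorem just proved then forces $P' = P$, giving the claimed limits on both the range of $P$ and of $I-P$.

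The main obstacle is the identification step: recovering the fundamental-matrix form $\bar{X}(t)\bar{P}\bar{X}^{-1}(s)$ without ever inverting $A$ or $\bar{A}$. The crucial feature making this possible is that the recursions derived for $U$ and $V$ propagate by multiplication by $\bar{A}$ only (never by $\bar{A}^{-1}$), and the complementarity relations at $s=t$ encode exactly the bookkeeping that the otherwise-unavailable inverses would provide --- which is precisely the gain over the product-integral approach of \cite{Lizama2} that the introduction advertises.
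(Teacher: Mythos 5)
Your strategy is essentially the paper's: both arguments exploit the fact that the conjugated projections $X(t+\theta_k)PX^{-1}(s+\theta_k)$ are bounded uniformly in $k$ by the dichotomy estimates (\ref{2})--(\ref{3}), pass those estimates to the limit to equip the limiting system (\ref{lim sys}) with a dichotomy, and then lean on the uniqueness-of-projection theorem; your recursions $U(t+1,s)=\bar{A}(t)U(t,s)$ and $U(t,s-1)=U(t,s)\bar{A}(s-1)$ are a reorganization of the paper's factorization $X(t+\theta_{k})=X_{k}(t)X(t_{0}+\theta_{k})$ with $X_{k}(t)\rightarrow\bar{X}(t)$, and your treatment of the backward limits via uniqueness matches the paper's.

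There is, however, one genuine gap, and it sits exactly where the paper concentrates its effort. The theorem asserts that the limits (\ref{forward limit1}) and (\ref{forward limit 2}) exist along the sequence $\{\theta_k\}$ already fixed by the almost automorphy of $A$; your diagonal extraction only yields convergence along a further subsequence, and as written you never return to the full sequence. A priori, two different diagonal subsequences could produce two different limits $U(0,0)$, in which case $\lim_{k}X(t_0+\theta_k)PX^{-1}(t_0+\theta_k)$ would simply fail to exist. The paper's proof is organized precisely to exclude this: it supposes two subsequences of $X(t_{0}+\theta_{k})PX^{-1}(t_{0}+\theta_{k})$ converge to $\overline{P}\neq\underline{P}$, shows that both limits are dichotomy projections of the \emph{same} limiting system (\ref{lim sys}) with the \emph{same} constants, and invokes the uniqueness theorem to force $\overline{P}=\underline{P}$, whence the bounded sequence converges. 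Your own machinery closes the gap in the same way --- every subsequential limit $U(0,0)$ you construct is a dichotomy projection for (\ref{lim sys}) with constants $\alpha_{1},\alpha_{2},\beta_{1},\beta_{2}$, hence equals the unique such projection $\bar{P}$, and a bounded sequence in a finite-dimensional space all of whose convergent subsequences share one limit must converge --- but you deploy the uniqueness theorem only for the backward limits (\ref{back1})--(\ref{back2}), not for the forward ones, so the central existence claim is left unproved as stated. The same remark applies to your backward step, where you must again argue convergence along the full sequence rather than a further extraction.
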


\begin{proof}
We first show that the sequence $\left\{  X(t_{0}+\theta_{k})PX^{-1}%
(t_{0}+\theta_{k})\right\}  $ is convergent. Suppose the contrary, then there
exist two subsequences
\[
\left\{  X(t_{0}+\theta_{k_{m}})PX^{-1}(t_{0}+\theta_{k_{m}})\right\}
\ \ \text{and\ \ }\left\{  X(t_{0}+\theta_{k_{m}^{\prime}})PX^{-1}%
(t_{0}+\theta_{k_{m}^{\prime}})\right\}
\]
converging two different
numbers $\overline{P}$ and \underline{$P$}$,$ respectively. From (\ref{2}) we
have%
\begin{equation}
\left\Vert X(t+\theta_{k_{m}})PX^{-1}(s+\theta_{k_{m}}))\right\Vert
_{\mathcal{B}(\mathcal{X})}\leq\beta_{1}\left(  1+\alpha_{1}\right)
^{s-t},\text{ }t\geq s. \label{4.0}%
\end{equation}
and
\begin{equation}
\left\Vert X(t+\theta_{k_{m}^{\prime}})PX^{-1}(s+\theta_{k_{m}^{\prime}%
}))\right\Vert _{\mathcal{B}(\mathcal{X})}\leq\beta_{1}\left(  1+\alpha
_{1}\right)  ^{s-t},\text{ }t\geq s. \label{5.0}%
\end{equation}
Let $X_{k_{m}}(t)$ and $X_{k_{m}^{\prime}}(t)$ denote the principal
fundamental matrix solutions of the systems:%
\begin{equation}
x(t+1)=A(t+\theta_{k_{m}})x(t),\text{ }x(t_{0})=x_{0}, \label{6}%
\end{equation}
and%
\begin{equation}
x(t+1)=A(t+\theta_{k_{m}^{\prime}})x(t),\text{ }x(t_{0})=x_{0}, \label{7}%
\end{equation}
respectively. Then we must have
\begin{equation}
X(t+\theta_{k_{m}})=X_{k_{m}}(t)X(t_{0}+\theta_{k_{m}}). \label{6.0}%
\end{equation}
since%
\[
\Delta\left[  X_{k_{m}}(t)^{-1}X(t+\theta_{k_{m}})\right]  =0.
\]
Similarly, we get
\begin{equation}
X(t+\theta_{k_{m}^{\prime}})=X_{k_{m}^{\prime}}(t)X(t_{0}+\theta
_{k_{m}^{\prime}}). \label{7.0}%
\end{equation}
Since $A(t+\theta_{k_{m}})\rightarrow\bar{A}(t)$, $A(t+\theta_{k_{m}^{\prime}%
})x(t)\rightarrow\bar{A}(t)x(t)$ as $m\rightarrow\infty$ for each
$t\in\mathbb{Z}$, we have%
\[
A(t+\theta_{k_{m}})x(t)\rightarrow\bar{A}(t)x(t),
\]
and%
\[
A(t+\theta_{k_{m}^{\prime}})x(t)\rightarrow\bar{A}(t)x(t).
\]
Thus, the sequences $\left\{  X_{k_{m}}(t)\right\}  $ and $\left\{
X_{k_{m}^{\prime}}(t)\right\}  $ converge to $\bar{X}(t)$ as $m\rightarrow
\infty$ for each $t\in\mathbb{Z}$. Now, the exponential dichotomy of the
linear homogeneous system (\ref{1}) plays a crucial role. Using (\ref{6.0})
along with (\ref{4.0}) and (\ref{5.0}), we get%
\[
\left\Vert X_{k_{m}}(t)X(t_{0}+\theta_{k_{m}})PX^{-1}(t_{0}+\theta_{k_{m}%
})X^{-1}_{k_{m}}(s)\right\Vert _{\mathcal{B}(\mathcal{X})}\leq\beta
_{1}\left(  1+\alpha_{1}\right)  ^{s-t},\text{ }t\geq s
\]
and%
\[
\left\Vert X_{k_{m}^{\prime}}(t)X(t_{0}+\theta_{k_{m}^{\prime}})PX^{-1}%
(t_{0}+\theta_{k_{m}^{\prime}})X^{-1}_{k_{m}^{\prime}}(s)\right\Vert
_{\mathcal{B}(\mathcal{X})}\leq\beta_{1}\left(  1+\alpha_{1}\right)
^{s-t},\text{ }t\geq s.
\]
Taking the limit as $m\rightarrow\infty,$ we obtain%
\begin{align}
\left\Vert \bar{X}(t)\overline{P}\bar{X}^{-1}(s)\right\Vert _{\mathcal{B}%
(\mathcal{X})}  &  \leq\beta_{1}\left(  1+\alpha_{1}\right)  ^{s-t},\text{
}t\geq s,\label{12}\\
\left\Vert \bar{X}(t)\underline{P}\bar{X}^{-1}(s)\right\Vert _{\mathcal{B}%
(\mathcal{X})}  &  \leq\beta_{1}\left(  1+\alpha_{1}\right)  ^{s-t},\text{
}t\geq s. \label{13}%
\end{align}
Applying the similar procedure we arrive at the following inequalities%
\begin{align}
\left\Vert \bar{X}(t)\left(  I-\overline{P}\right)  \bar{X}^{-1}(s)\right\Vert
_{\mathcal{B}(\mathcal{X})}  &  \leq\beta_{2}\left(  1+\alpha_{2}\right)
^{t-s},\text{ }s\geq t,\label{14}\\
\left\Vert \bar{X}(t)\left(  I-\underline{P}\right)  \bar{X}^{-1}%
(s)\right\Vert _{\mathcal{B}(\mathcal{X})}  &  \leq\beta_{2}\left(
1+\alpha_{2}\right)  ^{t-s},\text{ }s\geq t. \label{15}%
\end{align}
Inequalities (\ref{12}-\ref{15}) show that the limiting system (\ref{lim sys})
admits exponential dichotomy and both $\overline{P}$ and \underline{$P$} are
projections. By Theorem \ref{thm projection unique} we conclude that
$\overline{P}=$\underline{$P$}. This means the sequence $\left\{  X(t_{0}+\theta
_{k})PX^{-1}(t_{0}+\theta_{k})\right\}  $ is convergent as desired. Assume
that $X(t_{0}+\theta_{k})PX^{-1}(t_{0}+\theta_{k})\rightarrow\overline{P}$ and that
$X_{k}(t)$ is the principal fundamental matrix solution of the system%
\[
x(t+1)=A(t+\theta_{k})x(t),\text{ }x(t_{0})=x_{0}.
\]
Then $X_{k}(t)\rightarrow\bar{X}(t)$ and $X_{k}^{-1}(s)\rightarrow\bar{X}%
^{-1}(s)$ as $k\rightarrow\infty$ for each $t,s\in\mathbb{Z}$. This means for
each $t\in\mathbb{Z}$%
\[
X(t+\theta_{k})PX^{-1}(s+\theta_{k})\rightarrow\bar{X}(t)\overline{P}\bar
{X}^{-1}(s)\text{ for }s\in(-\infty,t]\cap\mathbb{Z}%
\]
and%
\[
X(t+\theta_{k})\left(  I-P\right)  X^{-1}(s+\theta_{k})\rightarrow\bar
{X}(t)\left(  I-\overline{P}\right)  \bar{X}^{-1}(s)\text{ for }s\in\lbrack
t,\infty)\cap\mathbb{Z}.
\]
Hence, we prove (\ref{forward limit1}) and (\ref{forward limit 2}). From
(\ref{4.0}) and (\ref{5.0}) we also have%
\[
\left\Vert X(t+\theta_{k})PX^{-1}(s+\theta_{k}))\right\Vert _{\mathcal{B}%
(\mathcal{X})}\leq\beta_{1}\left(  1+\alpha_{1}\right)  ^{s-t},\text{ }t\geq
s
\]
and similarly,%
\[
\left\Vert X(t+\theta_{k})\left(  I-P\right)  X^{-1}(s+\theta_{k})\right\Vert
_{\mathcal{B}(\mathcal{X})}\leq\beta_{2}\left(  1+\alpha_{2}\right)
^{s-t},\text{ }s\geq t.
\]
Taking limit as $k\rightarrow\infty$ we get%
\begin{align*}
\left\Vert \bar{X}(t)\overline{P}\bar{X}^{-1}(s)\right\Vert _{\mathcal{B}%
(\mathcal{X})}  &  \leq\beta_{1}\left(  1+\alpha_{1}\right)  ^{s-t},\text{
}t\geq s,\\
\left\Vert \bar{X}(t)\left(  I-\overline{P}\right)  \bar{X}^{-1}(s)\right\Vert
_{\mathcal{B}(\mathcal{X})}  &  \leq\beta_{2}\left(  1+\alpha_{2}\right)
^{s-t},\text{ }s\geq t.
\end{align*}
This shows that the limiting system (\ref{lim sys}) admits exponential
dichotomy with the projection $\overline{P}$ and the positive constants
$\alpha_{1},\alpha_{2}$, $\beta_{1}$, and $\beta_{2}$. To prove (\ref{back1})
and (\ref{back2}), we can follow the similar procedure that we used to get
(\ref{forward limit1}) and (\ref{forward limit 2}). This completes the proof.
\end{proof}

\section{Existence results}

In this section, we propose some sufficient conditions for existence of almost
automorphic solutions of the nonlinear neutral delay difference system%
\begin{equation}
x(t+1)=A(t)x(t)+\Delta Q(t,x(t-g(t)))+G(t,x(t),x(t-g(t))), \label{neutral}%
\end{equation}
where $A(t)$ is an $n\times n$ matrix function, $g:\mathbb{Z\rightarrow
Z}_{+}$ is scalar, and the
functions $Q:\mathbb{Z\times}\mathcal{X}\mathbb{\rightarrow}\mathcal{X}$ and
$G:\mathbb{Z\times\mathcal{X}\times\mathcal{X}\rightarrow}\mathcal{X}$ are
continuous in $x$.

In our analysis, we use the following fixed point theorem:

\begin{theorem}
[Krasnoselskii]Let $\mathbb{M}$ be a closed, convex and nonempty subset of a
Banach space $\left(  \mathbb{B},\left\Vert .\right\Vert \right)  $. Suppose
that $H_{1}$ and $H_{2}$ map $\mathbb{M}$ into $\mathbb{B}$ such that

\begin{enumerate}
\item[i.] $x,y\in\mathbb{M}$ implies $H_{1}x+H_{2}y\in\mathbb{M}$,

\item[ii.] $H_{2}$ is continuous and $H_{2}\mathbb{M}$ contained in a compact set,

\item[iii.] $H_{1}$ is a contraction mapping.
\end{enumerate}

Then there exists $z\in\mathbb{M}$ with $z=H_{1}z+H_{2}z.$
\end{theorem}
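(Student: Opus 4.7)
The plan is to reduce the combined contraction-plus-compact-perturbation problem to a pure compact-map problem, so that Schauder's fixed point theorem can finish the argument. The key device is to use the contraction part to parametrize a map whose fixed points coincide with those we want.

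First, I would fix an arbitrary $y\in\mathbb{M}$ and consider the map $T_{y}:\mathbb{M}\to\mathbb{B}$ defined by $T_{y}(x):=H_{1}x+H_{2}y$. By hypothesis (i), $T_{y}$ sends $\mathbb{M}$ into $\mathbb{M}$, and by (iii) it is a contraction with the same Lipschitz constant $\alpha<1$ as $H_{1}$. Since $\mathbb{M}$ is a closed subset of the Banach space $\mathbb{B}$, it is complete, so Banach's contraction principle yields a unique fixed point $\phi(y)\in\mathbb{M}$ satisfying $\phi(y)=H_{1}\phi(y)+H_{2}y$. This defines a map $\phi:\mathbb{M}\to\mathbb{M}$ whose fixed points are exactly the $z\in\mathbb{M}$ with $z=H_{1}z+H_{2}z$, so it suffices to apply Schauder's theorem to $\phi$.

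Next I would verify the two Schauder hypotheses. For continuity: given $y_{1},y_{2}\in\mathbb{M}$, the defining equation and the contraction bound give
\[
\left\Vert \phi(y_{1})-\phi(y_{2})\right\Vert \leq\alpha\left\Vert \phi(y_{1})-\phi(y_{2})\right\Vert +\left\Vert H_{2}y_{1}-H_{2}y_{2}\right\Vert ,
\]
whence $\left\Vert \phi(y_{1})-\phi(y_{2})\right\Vert \leq(1-\alpha)^{-1}\left\Vert H_{2}y_{1}-H_{2}y_{2}\right\Vert $, so the continuity of $H_{2}$ from (ii) transfers to $\phi$. For relative compactness of $\phi(\mathbb{M})$: rewrite the fixed-point equation as $(I-H_{1})\phi(y)=H_{2}y$. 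Since $H_{1}$ is a contraction on all of $\mathbb{B}$, the operator $I-H_{1}$ is a homeomorphism of $\mathbb{B}$ onto itself with continuous inverse, so $\phi=(I-H_{1})^{-1}\circ H_{2}$ on $\mathbb{M}$. Then $\phi(\mathbb{M})=(I-H_{1})^{-1}(H_{2}\mathbb{M})$ is the continuous image of a relatively compact set, hence itself relatively compact.

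Finally, because $\mathbb{M}$ is closed, convex and nonempty and $\phi:\mathbb{M}\to\mathbb{M}$ is continuous with $\overline{\phi(\mathbb{M})}$ compact, Schauder's fixed point theorem produces $z\in\mathbb{M}$ with $\phi(z)=z$, giving the desired $z=H_{1}z+H_{2}z$. The main obstacle I anticipate is the clean handling of compactness: one must ensure that $(I-H_{1})^{-1}$ is genuinely defined on all of $\mathbb{B}$ (so that the identity $\phi=(I-H_{1})^{-1}H_{2}$ makes sense on the whole of $\mathbb{M}$ even though $H_{1}$ is only assumed a contraction on $\mathbb{M}$); this is resolved by noting that the contraction constant $\alpha$ allows one to extend $H_{1}$ by any contractive extension to $\mathbb{B}$, or equivalently, to run the Banach step inside $\mathbb{M}$ and invoke continuity of the inverse only on the relevant image. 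Everything else is routine application of Banach and Schauder.
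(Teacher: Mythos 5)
The paper does not prove this statement: it is quoted as Krasnoselskii's classical fixed point theorem and used only as a tool, so there is no in-paper argument to compare against. Your proposal is the standard textbook proof --- define the solution map $\phi(y)$ of $x=H_{1}x+H_{2}y$ via Banach's principle on the complete set $\mathbb{M}$, check that $\phi$ is continuous with relatively compact range, and apply Schauder --- and it is essentially correct.

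One step is stated too strongly and should be repaired. You assert that $I-H_{1}$ is a homeomorphism of $\mathbb{B}$ onto itself, but $H_{1}$ is only given as a contraction on $\mathbb{M}$; the fix you suggest of extending $H_{1}$ contractively to all of $\mathbb{B}$ is itself nontrivial for maps into a general Banach space (Kirszbraun-type extension with the same constant is a Hilbert-space phenomenon). Moreover, ``continuous image of a relatively compact set is relatively compact'' is false as stated when the map is only defined on that set (consider $x\mapsto 1/x$ on $(0,1)$). Both issues disappear if you simply reuse the estimate you already derived for continuity: $\left\Vert \phi(y_{1})-\phi(y_{2})\right\Vert \leq(1-\alpha)^{-1}\left\Vert H_{2}y_{1}-H_{2}y_{2}\right\Vert$ shows that $\phi$ is Lipschitz relative to $H_{2}$, hence maps the totally bounded set $H_{2}\mathbb{M}$-preimage structure correctly; concretely, since $H_{2}\mathbb{M}$ is totally bounded and $\phi(\mathbb{M})$ is its image under the $(1-\alpha)^{-1}$-Lipschitz inverse of $I-H_{1}$ restricted to $(I-H_{1})(\mathbb{M})$, the set $\phi(\mathbb{M})$ is totally bounded, hence relatively compact in the complete space $\mathbb{B}$. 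With that substitution the proof is complete and no extension of $H_{1}$ beyond $\mathbb{M}$ is needed.
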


Hereafter, we suppose that the following conditions hold:

\begin{description}
\item[A1] Functions $A(t),$ $g(t),$ $Q(t,u)$ and $G(t,u,v)$ are almost
automorphic in $t,$

\item[A2] For $\zeta,\psi\in\mathcal{A}(\mathcal{X})$, there exists a constant
$E_{1}>0$ such that%
\[
\left\Vert Q\left(  t,\zeta\right)  -Q\left(  t,\psi\right)  \right\Vert
_{\mathcal{X}}\leq E_{1}\left\Vert \zeta-\psi\right\Vert _{\mathcal{A}%
(\mathcal{X})}\text{ for all }t\in\mathbb{Z}\text{, }%
\]

\item[A3] For $\zeta,\psi\in\mathcal{A}(\mathcal{X})$, there exists a constant
$E_{2}>0$ such that%
\[
\left\Vert G\left(  t,u,\zeta\right)  -G\left(  t,u,\psi\right)  \right\Vert
_{\mathcal{X}}\leq E_{2}\left(  \left\Vert u-v\right\Vert _{\mathcal{X}%
}+\left\Vert \zeta-\psi\right\Vert _{\mathcal{A}(\mathcal{X})}\right)  \text{
for all }t\in\mathbb{Z}%
\]
and for any vector valued functions $u$ and $v$ defined on $\mathcal{X}.$

\item[A4] The homogeneous system (\ref{1}) admits an exponential dichotomy.
\end{description}

The following result can be proven similar to \cite[Lemma 2.4]{Chen}, hence we
omit it.

\begin{lemma}
If $u,$ $v:\mathbb{Z}\rightarrow\mathcal{X}$\ are almost automorphic
functions, then $u(t-v(t))$ is also discrete almost automorphic.
\end{lemma}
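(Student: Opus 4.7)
The plan is to follow the proof strategy of \cite[Lemma 2.4]{Chen} adapted to the discrete setting, exploiting the fact that in the discrete case, convergence of an integer-valued sequence to a finite limit is eventually stationary. Note that for the composition $u(t-v(t))$ to make sense with $u:\mathbb{Z}\to\mathcal{X}$, we read $v$ as $\mathbb{Z}$-valued (which is the case of interest since the lemma will be applied with $v=g$, where $g:\mathbb{Z}\to\mathbb{Z}_+$).

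First I would fix an arbitrary integer sequence $\{k_n'\}_{n\in\mathbb{Z}_+}$. Since $u$ is discrete almost automorphic, I can extract a subsequence (which I relabel $\{k_n'\}$) such that $\lim_{n\to\infty} u(t+k_n')=\bar u(t)$ and $\lim_{n\to\infty}\bar u(t-k_n')=u(t)$ pointwise on $\mathbb{Z}$. Applying the same extraction to $v$ (which is also discrete almost automorphic) yields a further subsequence $\{k_n\}\subset\{k_n'\}$ such that
\[
\lim_{n\to\infty} v(t+k_n)=\bar v(t),\qquad \lim_{n\to\infty}\bar v(t-k_n)=v(t)
\]
for each $t\in\mathbb{Z}$, while we still have $\lim u(t+k_n)=\bar u(t)$ and $\lim\bar u(t-k_n)=u(t)$. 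Define $\bar w(t):=\bar u(t-\bar v(t))$.

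The key observation is that $v(t+k_n)$ is an integer sequence converging to the integer $\bar v(t)$, hence it is eventually constant: there exists $N=N(t)$ such that $v(t+k_n)=\bar v(t)$ for all $n\geq N$. Therefore, for $n\geq N$,
\[
u\bigl(t+k_n-v(t+k_n)\bigr)=u\bigl((t-\bar v(t))+k_n\bigr),
\]
and the right-hand side converges to $\bar u(t-\bar v(t))=\bar w(t)$ by the almost automorphy of $u$ applied at the point $t-\bar v(t)\in\mathbb{Z}$. This establishes the forward limit $\lim_{n\to\infty} u(t+k_n-v(t+k_n))=\bar w(t)$.

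For the backward limit, I would repeat the argument symmetrically: since $\bar v(t-k_n)\to v(t)$ in $\mathbb{Z}$, we have $\bar v(t-k_n)=v(t)$ for $n$ sufficiently large, so
\[
\bar u\bigl(t-k_n-\bar v(t-k_n)\bigr)=\bar u\bigl((t-v(t))-k_n\bigr)\longrightarrow u(t-v(t)),
\]
where the limit uses $\lim_{n\to\infty}\bar u(s-k_n)=u(s)$ at $s=t-v(t)$. Both limits together show $u(t-v(t))\in\mathcal{A}(\mathcal{X})$. The only delicate point is the integer-valued stationarity of $v(t+k_n)$ and $\bar v(t-k_n)$, which is precisely the discrete analogue replacing the uniform-continuity argument needed in the continuous case of \cite{Chen}, so no real obstacle arises once one carries out the two nested subsequence extractions in the correct order.
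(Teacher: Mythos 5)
Your proof is correct: the paper itself omits the argument, stating only that it ``can be proven similar to \cite[Lemma 2.4]{Chen}'', and your write-up is a valid instantiation of exactly that strategy, with the nested subsequence extraction (first for $u$, then for $v$) and the observation that an integer sequence converging to an integer is eventually stationary correctly replacing the continuity considerations needed in the continuous case. The only caveat, which you already flag, is that the lemma as stated makes sense only when $v$ is read as $\mathbb{Z}$-valued, which is the case in the paper's application ($v=g$ with $g:\mathbb{Z}\to\mathbb{Z}_+$).
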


We say that $x:\mathbb{Z}\rightarrow\mathcal{X}$ is a solution of%
\[
x(t+1)=A(t)x(t)+f(t,x\left(  t\right)  )
\]
if it satisfies%
\[
x(t)=%
{\displaystyle\sum\limits_{j=-\infty}^{t-1}}
X(t)PX^{-1}(j+1)f(j,x\left(  j\right)  )-%
{\displaystyle\sum\limits_{j=t}^{\infty}}
X(t)\left(  I-P\right)  X^{-1}(j+1)f(j,x\left(  j\right)  ),
\]
where $X(t)$ is principal fundamental matrix solution of system (\ref{1}) (see
\cite[Definition 4.1]{Lizama2}).

Now, define the mapping $H$ by
\[
(Hx)(t):=(H_{1}x)(t)+(H_{2}x)(t),
\]
where%
\begin{equation}
(H_{1}x)(t):=Q(t,x(t-g(t))), \label{H1}%
\end{equation}
and%
\[
(H_{2}x)(t):=%
{\displaystyle\sum\limits_{j=-\infty}^{t-1}}
X(t)PX^{-1}(j+1)\Lambda(j,x)-%
{\displaystyle\sum\limits_{j=t}^{\infty}}
X(t)\left(  I-P\right)  X^{-1}(j+1)\Lambda(j,x),
\]
where $\Lambda(j,x)$ is given by%
\begin{equation}
\Lambda(j,x):=\left(  A(j)-I\right)  Q(j,x(j-g(j)))+G(j,x(j),x(j-g(j))).
\label{f}%
\end{equation}

\begin{lemma}
The mapping $H$ maps $\mathcal{A}(\mathcal{X})$ into $\mathcal{A}%
(\mathcal{X})$.
\end{lemma}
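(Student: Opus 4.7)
The plan is to handle $H_1$ and $H_2$ separately. For $H_1x(t)=Q(t,x(t-g(t)))$, I would apply the preceding lemma (with $u=x$ and $v=g$) to get $x(t-g(t))\in\mathcal{A}(\mathcal{X})$, then invoke Theorem \ref{thmlipschitz} with hypothesis (A2) to conclude that $H_1x\in\mathcal{A}(\mathcal{X})$.

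For $H_2$, I would first establish that the integrand $\Lambda(\cdot,x)$ is discrete almost automorphic and uniformly bounded. Almost automorphy is a routine stacking of Theorem \ref{thm properties of almost automorphic}(i),(iii), Theorem \ref{thmlipschitz} applied to $Q$ and $G$ via (A2)--(A3), the preceding lemma for $x(t-g(t))$, and a direct common-subsequence extraction to handle the product $(A(j)-I)Q(j,x(j-g(j)))$. The uniform bound $\|\Lambda(j,x)\|_{\mathcal{X}}\leq M$ then follows from Theorem \ref{thm properties of almost automorphic}(vii). For an arbitrary integer sequence $\{k_n'\}$, a diagonal extraction produces a single subsequence $\{k_n\}$ along which the almost automorphic limits of $A,Q,G,g,x,\Lambda$ all exist and, by Theorem \ref{thm projection unique}, along which the kernels
\[
X(t+k_n)PX^{-1}(s+k_n)\to\bar X(t)\bar P\bar X^{-1}(s),\quad X(t+k_n)(I-P)X^{-1}(s+k_n)\to\bar X(t)(I-\bar P)\bar X^{-1}(s)
\]
converge pointwise in $s$ over the appropriate ranges.

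Shifting the summation index $j\mapsto j+k_n$ rewrites
\[
(H_2x)(t+k_n)=\sum_{j=-\infty}^{t-1}X(t+k_n)PX^{-1}(j+k_n+1)\Lambda(j+k_n,x)-\sum_{j=t}^{\infty}X(t+k_n)(I-P)X^{-1}(j+k_n+1)\Lambda(j+k_n,x).
\]
The main obstacle will be interchanging $\lim_{n\to\infty}$ with these two infinite sums, and the plan is to discharge it by a discrete dominated convergence argument: the dichotomy bounds combined with $\|\Lambda(j+k_n,x)\|_{\mathcal{X}}\leq M$ yield the $n$-independent, summable majorants $\beta_{1}M(1+\alpha_{1})^{j+1-t}$ for the forward sum and $\beta_{2}M(1+\alpha_{2})^{t-j-1}$ for the backward sum. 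Passing to the limit term by term then defines
\[
\overline{H_2x}(t):=\sum_{j=-\infty}^{t-1}\bar X(t)\bar P\bar X^{-1}(j+1)\bar\Lambda(j,\bar x)-\sum_{j=t}^{\infty}\bar X(t)(I-\bar P)\bar X^{-1}(j+1)\bar\Lambda(j,\bar x),
\]
where the same majorant estimate guarantees absolute convergence of both series. Replaying the argument with $-k_n$ in place of $k_n$, using the backward identities (\ref{back1})--(\ref{back2}) together with the reverse almost automorphic convergences of $A,Q,G,g,x$, gives $\overline{H_2x}(t-k_n)\to(H_2x)(t)$, so $H_2x\in\mathcal{A}(\mathcal{X})$ and therefore $Hx=H_1x+H_2x\in\mathcal{A}(\mathcal{X})$.
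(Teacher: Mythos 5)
Your proposal is correct and follows essentially the same route as the paper: decompose $H=H_1+H_2$, show $H_1x=Q(\cdot,x(\cdot-g(\cdot)))$ is almost automorphic via the composition lemmas, then shift the summation index in $(H_2x)(t+k_n)$ and pass to the limit term by term using the dichotomy kernels' limits and a dominated convergence argument, finishing with the reverse limit along $-k_n$. You merely spell out details the paper leaves implicit (the common-subsequence extraction, the explicit summable majorants $\beta_1 M(1+\alpha_1)^{j+1-t}$ and $\beta_2 M(1+\alpha_2)^{t-j-1}$, and the appeal to Theorem \ref{thm projection unique} for the kernel convergence), which is a welcome tightening rather than a different proof.
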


\begin{proof}
Suppose that $x\in\mathcal{A}(\mathcal{X})$. First, we deduce by using (A1-A3)
along with Theorem (\ref{thmlipschitz}) that the functions $Q$ and $G$ are
discrete almost automorphic. That is,%
\[
\lim_{n\rightarrow\infty}Q(t+k_{n},x(t+k_{n}-g(t+k_{n}))):=\overline
{Q}(t,\overline{x}(t-\overline{g}(t)))
\]
and%
\[
\lim_{n\rightarrow\infty}G(t+k_{n},x\left(  t+k_{n}\right)  ,x(t+k_{n}%
-g(t+k_{n}))):=\overline{G}(t,\overline{x}(t),\overline{x}(t-\overline{g}(t)))
\]
are well defined for each $t\in\mathbb{Z}$ and%
\[
\lim_{n\rightarrow\infty}\overline{Q}(t-k_{n},\overline{x}(t-k_{n}-\overline{g}(t-k_{n}%
)))=Q(t,x(t-g(t)))
\]
and%
\[
\lim_{n\rightarrow\infty}\overline{G}(t-k_{n},\overline{x}\left(  t-k_{n}\right)
,\overline{x}(t-k_{n}-\overline{g}(t-k_{n})))=G(t,x(t),x(t-g(t)))
\]
for each $t\in\mathbb{Z}$. Second, we have%
\begin{align*}
(Hx)(t+k_{n})  &  =Q(t+k_{n},x(t+k_{n}-g(t+k_{n})))+%
{\displaystyle\sum\limits_{j=-\infty}^{t-1}}
X(t+k_{n})PX^{-1}(j+k_{n}+1)\Lambda(j+k_{n},x)\\
&  -%
{\displaystyle\sum\limits_{j=t}^{\infty}}
X(t+k_{n})\left(  I-P\right)  X^{-1}(j+k_{n}+1)\Lambda(j+k_{n},x).
\end{align*}
Taking the limit $n\rightarrow\infty$ and employing Lebesgue convergence
theorem, we conclude that%
\begin{align*}
\overline{(Hx)}(t)  &  :=\lim_{n\rightarrow\infty}(Hx)(t+k_{n})=\overline
{Q}(t,\overline{x}(t-\overline{g}(t)))+%
{\displaystyle\sum\limits_{j=-\infty}^{t-1}}
\overline{X}(t)\overline{P}\overline{X}^{-1}(j+1)\overline{\Lambda}(j,x)\\
&  -%
{\displaystyle\sum\limits_{j=t}^{\infty}}
\overline{X}(t)\left(  I-\overline{P}\right)  \overline{X}^{-1}(j+1)\overline{\Lambda}(j,x),
\end{align*}
is well defined for each $t\in\mathbb{Z}$, where%
\[
\overline{\Lambda}(j,x):=\left(  \overline{A}(j)-I\right)  \overline{Q}(j,\overline{x}%
(j-\overline{g}(j)))+\overline{G}(j,\overline{x}(j),\overline{x}(j-\overline{g}(j))).
\]
Applying similar procedure to the following%
\begin{align*}
\overline{(Hx)}(t-k_{n})  &  =\overline{Q}(t-k_{n},\overline{x}(t-k_{n}-\overline{g}%
(t-k_{n})))+%
{\displaystyle\sum\limits_{j=-\infty}^{t-1}}
\overline{X}(t-k_{n})\overline{P}\overline{X}^{-1}(j-k_{n}+1)\overline{\Lambda}(j-k_{n})\\
&  -%
{\displaystyle\sum\limits_{j=t}^{\infty}}
\overline{X}(t-k_{n})\left(  I-\overline{P}\right)  \overline{X}^{-1}(j-k_{n}+1)\overline
{\Lambda}(j-k_{n},x).
\end{align*}
we get%
\[
\lim_{n\rightarrow\infty}\overline{(Hx)}(t-k_{n})=(Hx)(t),
\]
for each $t\in\mathbb{Z}$. This means $Hx\in\mathcal{A}(\mathcal{X})$. This
completes the proof.
\end{proof}

The following result is a direct consequence of (A2) and (\ref{H1}).

\begin{lemma}
Assume (A2). If $E_{1}<1,$ then the operator $H_{1}$ is a contraction.
\end{lemma}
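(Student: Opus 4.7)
The proof is essentially a one-step unpacking of hypothesis (A2) together with the definition (\ref{H1}) of $H_1$. First I would fix arbitrary $x,y\in\mathcal{A}(\mathcal{X})$ and, invoking the preceding composition lemma, note that both $t\mapsto x(t-g(t))$ and $t\mapsto y(t-g(t))$ belong to $\mathcal{A}(\mathcal{X})$, so (A2) is applicable to these reindexed profiles.

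Second, I would compute pointwise from (\ref{H1}) that
\[
\left\Vert (H_1 x)(t)-(H_1 y)(t)\right\Vert_{\mathcal{X}} = \left\Vert Q(t,x(t-g(t)))-Q(t,y(t-g(t)))\right\Vert_{\mathcal{X}} \leq E_1\left\Vert x(\cdot-g(\cdot))-y(\cdot-g(\cdot))\right\Vert_{\mathcal{A}(\mathcal{X})},
\]
the inequality being (A2). Since $\left\Vert\cdot\right\Vert_{\mathcal{A}(\mathcal{X})}$ is the supremum norm on $\mathbb{Z}$, and reindexing $s\mapsto s-g(s)$ sends $\mathbb{Z}$ into $\mathbb{Z}$, the right-hand side is bounded above by $E_1\left\Vert x-y\right\Vert_{\mathcal{A}(\mathcal{X})}$. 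Taking the supremum over $t\in\mathbb{Z}$ on the left then yields
\[
\left\Vert H_1 x - H_1 y\right\Vert_{\mathcal{A}(\mathcal{X})} \leq E_1\left\Vert x-y\right\Vert_{\mathcal{A}(\mathcal{X})},
\]
and the assumption $E_1<1$ is exactly what makes $H_1$ a strict contraction on $\mathcal{A}(\mathcal{X})$.

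There is essentially no obstacle here: the substantive content has been built into (A2) and into the composition lemma quoted just above. The only minor point requiring care is verifying that reindexing the argument by $g$ cannot increase the sup norm, which is immediate because the shifted indices $t-g(t)$ lie in $\mathbb{Z}$.
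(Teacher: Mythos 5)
Your argument is correct and is precisely the unpacking the paper has in mind: the paper states this lemma without proof, calling it ``a direct consequence of (A2) and (\ref{H1})'', and your computation --- applying (A2) pointwise to the reindexed profiles $x(\cdot-g(\cdot))$, $y(\cdot-g(\cdot))$, bounding the reindexed sup norm by $\left\Vert x-y\right\Vert_{\mathcal{A}(\mathcal{X})}$, and taking the supremum over $t$ --- is exactly that direct consequence. Nothing further is needed.
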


\begin{lemma}
Assume (A1-A4). Define the set
\[
\Pi_{M}:=\left\{  x\in\mathcal{A}(\mathcal{X}),\left\Vert x\right\Vert
_{\mathcal{A}(\mathcal{X})}\leq M\right\}
\]
where $M$ is a fixed constant. The operator $H_{2}$ is continuous and the
image$\ H_{2}\left(  \Pi_{M}\right)  $ is contained in a compact set.
\end{lemma}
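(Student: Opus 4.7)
The lemma packages two independent assertions — continuity of $H_{2}$ on $\Pi_{M}$ and relative compactness of the image $H_{2}(\Pi_{M})$ — and my approach is to treat each by combining the Lipschitz hypotheses (A2)--(A3), the boundedness consequences of almost automorphy recorded in Theorem \ref{thm properties of almost automorphic}, and the geometric decay supplied by the exponential dichotomy (A4).

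For continuity I would fix $x,y\in\Pi_{M}$ and estimate $\|(H_{2}x)(t)-(H_{2}y)(t)\|_{\mathcal{X}}$ summand by summand. Hypotheses (A2) and (A3) give
\[
\|\Lambda(j,x)-\Lambda(j,y)\|_{\mathcal{X}} \leq \bigl((\|A(j)\|_{\mathcal{B}(\mathcal{X})}+1)\,E_{1}+2E_{2}\bigr)\,\|x-y\|_{\mathcal{A}(\mathcal{X})},
\]
and since $A$ is almost automorphic, item vii of Theorem \ref{thm properties of almost automorphic} (applied in $\mathcal{B}(\mathcal{X})$) forces $\sup_{j}\|A(j)\|_{\mathcal{B}(\mathcal{X})}<\infty$, so the coefficient of $\|x-y\|_{\mathcal{A}(\mathcal{X})}$ is a finite constant $L$. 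Plugging this into the two sums defining $H_{2}$ and invoking the dichotomy estimates (\ref{2})--(\ref{3}), each sum is dominated by a convergent geometric series in $j$; taking the supremum over $t$ yields a Lipschitz bound $\|H_{2}x-H_{2}y\|_{\mathcal{A}(\mathcal{X})}\leq C\,\|x-y\|_{\mathcal{A}(\mathcal{X})}$. Running the same computation with $y=0$, and using the uniform bounds on $Q(\cdot,0)$ and $G(\cdot,0,0)$ furnished by item viii of the same theorem, shows $\|H_{2}x\|_{\mathcal{A}(\mathcal{X})}\leq\widetilde{M}$ uniformly for $x\in\Pi_{M}$.

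The main obstacle is the relative compactness claim: uniform boundedness in the sup norm on $\mathbb{Z}$ is in general far from pre-compactness, so a smoothing argument is unavoidable. My plan is to exploit the kernel's exponential decay by approximating $H_{2}$ uniformly on $\Pi_{M}$ with the truncated operators
\[
(H_{2}^{N}x)(t):=\sum_{j=t-N}^{t-1}X(t)PX^{-1}(j+1)\Lambda(j,x)-\sum_{j=t}^{t+N}X(t)(I-P)X^{-1}(j+1)\Lambda(j,x),
\]
for which the dichotomy yields $\|H_{2}x-H_{2}^{N}x\|_{\mathcal{A}(\mathcal{X})}\leq C(1+\min\{\alpha_{1},\alpha_{2}\})^{-N}$ independently of $x\in\Pi_{M}$. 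Because $(H_{2}^{N}x)(t)$ depends on $x$ only through its values on a finite window around $t$ and $\mathcal{X}$ is finite-dimensional, the set $\{(H_{2}^{N}x)(t):x\in\Pi_{M}\}$ is pre-compact at each $t$. Combined with the almost automorphy of every $H_{2}x$ proved in the preceding lemma and a discrete Ascoli-type diagonalization over expanding windows $[-R,R]\cap\mathbb{Z}$, one extracts from any sequence in $H_{2}(\Pi_{M})$ a subsequence that is Cauchy in the sup norm; passing to the limit $N\to\infty$ via a second diagonalization recovers relative compactness of $H_{2}(\Pi_{M})$ itself. The delicate point, and the reason this step is the main obstacle, is upgrading finite-window convergence to a genuinely uniform estimate over all of $\mathbb{Z}$: here the almost automorphic limit identities (\ref{back1})--(\ref{back2}) of Theorem \ref{thm projection unique} are the crucial tool, because they give the translation-invariant structure that allows local convergence to propagate globally along any prescribed integer sequence.
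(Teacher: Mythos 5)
Your continuity argument coincides with the paper's: the Lipschitz estimate $\left\Vert \Lambda(j,x)-\Lambda(j,y)\right\Vert _{\mathcal{X}}\leq\left(  \left(  \left\Vert A\right\Vert +1\right)  E_{1}+2E_{2}\right)  \left\Vert x-y\right\Vert _{\mathcal{A}(\mathcal{X})}$ fed into the geometric series $\beta_{1}\frac{1+\alpha_{1}}{\alpha_{1}}+\frac{\beta_{2}}{\alpha_{2}}$ coming from (\ref{2})--(\ref{3}) is exactly what the paper does, and your uniform bound on $\left\Vert H_{2}x\right\Vert _{\mathcal{A}(\mathcal{X})}$ over $\Pi_{M}$ is the paper's inequality (\ref{inequality}). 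That half is correct and essentially identical in route.

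The compactness half contains a genuine gap, located precisely at the step you yourself call delicate. Truncation to $H_{2}^{N}$ plus finite-dimensionality gives pre-compactness of $\left\{  (H_{2}^{N}x)(t):x\in\Pi_{M}\right\}$ for each fixed $t$, and diagonalization over windows $[-R,R]\cap\mathbb{Z}$ extracts a subsequence converging uniformly on every finite window; but $\mathcal{A}(\mathcal{X})$ carries the supremum norm over all of $\mathbb{Z}$, and locally uniform convergence of a bounded sequence does not imply sup-norm convergence. What is needed is a uniform control at infinity for the whole family $H_{2}(\Pi_{M})$ --- an equi-almost-automorphy statement --- and the tool you invoke, the limit identities (\ref{back1})--(\ref{back2}) of Theorem \ref{thm projection unique}, cannot supply it: those identities describe the dichotomy kernel $X(t)PX^{-1}(s)$ along a single subsequence extracted from a single prescribed integer sequence, while almost automorphy of each individual image $H_{2}x$ is a one-function-at-a-time pointwise-limit property carrying no uniformity over $x\in\Pi_{M}$. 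Consequently your ``second diagonalization'' never produces a sup-norm Cauchy subsequence, and the argument does not close. To be fair, the paper's own proof is no more convincing at this point --- it asserts uniform boundedness together with boundedness of $\Delta\left(  H_{2}(\zeta_{n})\right)$ and cites Arzela--Ascoli, which on the non-compact domain $\mathbb{Z}$ with the supremum norm suffers from exactly the defect you diagnosed --- so you were right to be suspicious of the crude version; but replacing it requires proving uniform translation-compactness of the family $\left\{  \Lambda(\cdot,x):x\in\Pi_{M}\right\}$, which does not follow from (A1)--(A4) without further hypotheses or argument.
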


\begin{proof}
By (A4), we have the following:%
\begin{equation}
\left\Vert (H_{2}x)\right\Vert _{\mathcal{A}(\mathcal{X})}\leq\left\Vert
f(.,x(.)\right\Vert _{\mathcal{A}(\mathcal{X})}\left[  \beta_{1}\frac
{1+\alpha_{1}}{\alpha_{1}}+\frac{\beta_{2}}{\alpha_{2}}\right]
,\label{inequality}%
\end{equation}
where $f$ is defined by (\ref{f}). To see that $H_{2}$ is continuous, suppose
$\zeta$, $\psi\in\mathcal{A}(\mathcal{X})$ and define the number
$\delta(\varepsilon)>0$ by%
\[
\delta:=\frac{\varepsilon}{\left[  \left(  \left\Vert A\right\Vert +1\right)
E_{1}\left\Vert \zeta-\psi\right\Vert _{\mathcal{A}(\mathcal{X})}%
+2E_{2}\left\Vert \zeta-\psi\right\Vert _{\mathcal{A}(\mathcal{X})}\right]
\left(  \beta_{1}\frac{1+\alpha_{1}}{\alpha_{1}}+\frac{\beta_{2}}{\alpha_{2}%
}\right)  },
\]
for any given $\varepsilon>0$ where
\[
\left\Vert A\right\Vert =\sup_{t\in\mathbb{Z}}\left\vert A\left(  t\right)
\right\vert
\]
and%
\[
\left\vert A(t)\right\vert :=\max_{1\leq i\leq n}%
{\displaystyle\sum\limits_{j=1}^{n}}
\left\vert a_{ij}\left(  t\right)  \right\vert .
\]
If $\left\Vert \zeta-\psi\right\Vert _{\mathcal{A}(\mathcal{X})}<\delta$, then
we have%
\begin{align*}
\left\Vert H_{2}\left(  \zeta\right)  (t)-H_{2}\left(  \psi\right)
(t)\right\Vert _{\mathcal{X}} &  \leq%
{\displaystyle\sum\limits_{j=-\infty}^{t-1}}
\left(  \left\Vert X(t)PX^{-1}\left(  j+1\right)  \right\Vert _{\mathcal{B}%
(\mathcal{X})}\right.  \\
&  \left.  \times\left[  \left(  \left\Vert A\right\Vert +1\right)  \left\Vert
Q\left(  j,\zeta\left(  j-g(j)\right)  \right)  -Q\left(  s,\psi\left(
j-g(j)\right)  \right)  \right\Vert _{\mathcal{X}}\right.  \right.  \\
&  \left.  +\left.  \left\Vert G\left(  j,\zeta\left(  j\right)  ,\zeta\left(
j-g(j)\right)  \right)  -G\left(  j,\psi\left(  j\right)  ,\psi\left(
j-g(j)\right)  \right)  \right\Vert _{\mathcal{X}}\right]  \right)  \\
&  +%
{\displaystyle\sum\limits_{j=t}^{\infty}}
\left(  \left\Vert X(t)\left(  I-P\right)  X^{-1}\left(  j+1\right)
\right\Vert _{\mathcal{B}%
(\mathcal{X})}\right.  \\
&  \left.  \times\left[  \left(  \left\Vert A\right\Vert +1\right)  \left\Vert
Q\left(  j,\zeta\left(  j-g(j)\right)  \right)  -Q\left(  s,\psi\left(
j-g(j)\right)  \right)  \right\Vert _{\mathcal{X}}\right.  \right.  \\
&  \left.  +\left.  \left\Vert G\left(  j,\zeta\left(  j\right)  ,\zeta\left(
j-g(j)\right)  \right)  -G\left(  j,\psi\left(  j\right)  ,\psi\left(
j-g(j)\right)  \right)  \right\Vert _{\mathcal{X}}\right]  \right)  .
\end{align*}
By (vii) of Theorem \ref{thm properties of almost automorphic} and (A2-A4), we
get%
\begin{align*}
\left\Vert H_{2}\left(  \zeta\right)  (t)-H_{2}\left(  \psi\right)
(t)\right\Vert _{\mathcal{X}} &  \leq%
{\displaystyle\sum\limits_{j=-\infty}^{t-1}}
\beta_{1}\left(  1+\alpha_{1}\right)  ^{j+1-t}\left[  \left(  \left\Vert
A\right\Vert +1\right)  E_{1}\left\Vert \zeta-\psi\right\Vert _{\mathcal{A}%
(\mathcal{X})}+2E_{2}\left\Vert \zeta-\psi\right\Vert _{\mathcal{A}%
(\mathcal{X})}\right]  \\
&  +%
{\displaystyle\sum\limits_{j=t}^{\infty}}
\beta_{2}\left(  1+\alpha_{2}\right)  ^{t-j-1}\left[  \left(  \left\Vert
A\right\Vert +1\right)  E_{1}\left\Vert \zeta-\psi\right\Vert _{\mathcal{A}%
(\mathcal{X})}+2E_{2}\left\Vert \zeta-\psi\right\Vert _{\mathcal{A}%
(\mathcal{X})}\right]  \\
&  \leq\left[  \left(  \left\Vert A\right\Vert +1\right)  E_{1}\left\Vert
\zeta-\psi\right\Vert _{\mathcal{A}(\mathcal{X})}+2E_{2}\left\Vert \zeta
-\psi\right\Vert _{\mathcal{A}(\mathcal{X})}\right]  \left(  \beta_{1}%
\frac{1+\alpha_{1}}{\alpha_{1}}+\frac{\beta_{2}}{\alpha_{2}}\right)  \\
&  <\varepsilon,
\end{align*}
which shows that $H_{2}$ is continuous. \newline Now, we show that $H_{2}%
(\Pi_{M})$ is contained in a compact set. For any $\zeta,\psi\in\Pi_{M}$ we
have%
\begin{align*}
\left\Vert G(t,\zeta(t),\psi\left(  t-g(t)\right)  )\right\Vert _{\mathcal{X}}
&  \leq\left\Vert G(t,\zeta(t),\psi\left(  t-g(t)\right)
)-G(t,0,0)\right\Vert _{\mathcal{X}}+\left\Vert G(t,0,0)\right\Vert
_{\mathcal{X}}\\
&  \leq E_{2}\left(  \left\Vert \zeta\right\Vert _{\mathcal{A}(\mathcal{X}%
)}+\left\Vert \psi\right\Vert _{\mathcal{A}(\mathcal{X})}\right)  +a\\
&  \leq2ME_{2}+a,
\end{align*}
and%
\begin{align*}
\left\Vert Q\left(  t,\zeta\left(  t-g(t)\right)  \right)  \right\Vert
_{\mathcal{X}} &  \leq\left\Vert Q\left(  t,\zeta\left(  t-g(t)\right)
\right)  -Q(t,0)\right\Vert _{\mathcal{X}}+\left\Vert Q(t,0)\right\Vert
_{\mathcal{X}}\\
&  \leq E_{1}\left\Vert \zeta\right\Vert _{\mathcal{A}(\mathcal{X})}+b\\
&  \leq E_{1}M+b
\end{align*}
where $a:=\left\Vert G(t,0,0)\right\Vert _{\mathcal{X}}$ and $b:=\left\Vert
Q(t,0)\right\Vert _{\mathcal{X}}$. This implies%
\[
\left\Vert H_{2}\left(  \zeta_{n}\right)  (t)\right\Vert _{\mathcal{A}%
(\mathcal{X})}\leq\left[  \left(  \left\Vert A\right\Vert +1\right)  \left(
E_{1}M+b\right)  +2E_{2}M+a\right]  \left[  \beta_{1}\frac{1+\alpha_{1}%
}{\alpha_{1}}+\frac{\beta_{2}}{\alpha_{2}}\right]
\]
for any sequence $\{\zeta_{n}\}$ in $\Pi_{M}$. Moreover, from (A1),(A4) and
(\ref{inequality}), we deduce that $\Delta\left(  H_{2}(\zeta_{n}(t))\right)
$ is bounded. That means, $H_{2}(\zeta_{n})$ is uniformly bounded an
equicontinuous. The proof follows from Arzela-Ascoli theorem.
\end{proof}

\begin{theorem}
\label{final theorem}Assume (A1-A4). Let $M_{0}$ be a constant satisfying the
following inequality%
\[
E_{1}M_{0}+b+\left[  \left(  \left\Vert A\right\Vert +1\right)  \left(
E_{1}M_{0}+b\right)  +2E_{2}M_{0}+a\right]  \left[  \beta_{1}\frac
{1+\alpha_{1}}{\alpha_{1}}+\frac{\beta_{2}}{\alpha_{2}}\right]  \leq M_{0},
\]
where $E_{1}\in\left(  0,1\right)  $ and%
\[
a:=\left\Vert G(t,0,0)\right\Vert _{\mathcal{X}},b:=\left\Vert
Q(t,0)\right\Vert _{\mathcal{X}}.
\]
Then the equation (\ref{neutral}) has an almost automorphic solution in
$\Pi_{M_{0}}$.
\end{theorem}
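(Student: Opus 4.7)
The plan is to invoke Krasnoselskii's fixed point theorem with the splitting $H=H_{1}+H_{2}$ already introduced in this section, applied to the set $\Pi_{M_{0}}$. Since $\Pi_{M_{0}}$ is a closed, bounded, convex, nonempty subset of the Banach space $\mathcal{A}(\mathcal{X})$, and the preceding two lemmas already supply the continuity and pre-compactness of $H_{2}(\Pi_{M_{0}})$ and the contraction property of $H_{1}$ under the standing assumption $E_{1}\in(0,1)$, the essential remaining task is to verify the self-mapping condition: for every $x,y\in\Pi_{M_{0}}$ the sum $H_{1}x+H_{2}y$ must lie in $\Pi_{M_{0}}$.

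To verify this, I would estimate the two summands separately. For $H_{1}x$, apply (A2) at the reference point $0$ with $b=\left\Vert Q(t,0)\right\Vert _{\mathcal{X}}$:
\[
\left\Vert (H_{1}x)(t)\right\Vert _{\mathcal{X}}\leq E_{1}\left\Vert x\right\Vert _{\mathcal{A}(\mathcal{X})}+b\leq E_{1}M_{0}+b.
\]
For $H_{2}y$, first combine (A2), (A3), the triangle inequality, and the bound $\left\Vert A(t)-I\right\Vert _{\mathcal{B}(\mathcal{X})}\leq \left\Vert A\right\Vert +1$ to estimate the integrand $\Lambda(j,y)$ defined in (\ref{f}):
\[
\left\Vert \Lambda(j,y)\right\Vert _{\mathcal{X}}\leq\left(\left\Vert A\right\Vert +1\right)\left(E_{1}M_{0}+b\right)+2E_{2}M_{0}+a,
\]
uniformly in $j$, where $a=\left\Vert G(t,0,0)\right\Vert _{\mathcal{X}}$. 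Then apply the exponential dichotomy bounds (\ref{2})--(\ref{3}) guaranteed by (A4), and sum the resulting geometric series exactly as in the derivation of (\ref{inequality}), to obtain
\[
\left\Vert (H_{2}y)(t)\right\Vert _{\mathcal{X}}\leq\left[\left(\left\Vert A\right\Vert +1\right)\left(E_{1}M_{0}+b\right)+2E_{2}M_{0}+a\right]\left[\beta_{1}\frac{1+\alpha_{1}}{\alpha_{1}}+\frac{\beta_{2}}{\alpha_{2}}\right].
\]

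Adding these two bounds, taking the supremum over $t\in\mathbb{Z}$, and invoking the hypothesized inequality on $M_{0}$ yields $\left\Vert H_{1}x+H_{2}y\right\Vert _{\mathcal{A}(\mathcal{X})}\leq M_{0}$, so the self-mapping condition holds. Krasnoselskii's theorem then produces $z\in\Pi_{M_{0}}$ with $z=H_{1}z+H_{2}z=Hz$; the earlier lemma guarantees $Hz\in\mathcal{A}(\mathcal{X})$, and by the very definition of $H_{1},H_{2}$ and $\Lambda$, this fixed-point identity is precisely the integral (sum) form of a solution of (\ref{neutral}) stated after assumption (A4). Hence $z$ is the sought almost automorphic solution of (\ref{neutral}) lying in $\Pi_{M_{0}}$.

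The proof is essentially an assembly: the genuinely hard pieces are already isolated in the preceding lemmas, namely $H(\mathcal{A}(\mathcal{X}))\subseteq\mathcal{A}(\mathcal{X})$ (which required Theorem \ref{thm projection unique} for the dichotomy data to pass to the limit), the Arzel\`{a}--Ascoli-type compactness of $H_{2}(\Pi_{M_{0}})$, and the contractivity of $H_{1}$. The only obstacle specific to this theorem is the careful bookkeeping in the self-mapping estimate above, where one must track that the $2E_{2}$ factor (from $G$ being Lipschitz in both vector arguments) and the $(\left\Vert A\right\Vert +1)$ factor (from regrouping $A(j)Q-IQ$ inside $\Lambda$) match exactly the coefficients in the hypothesized inequality so that the bound closes at $M_{0}$.
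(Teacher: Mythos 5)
Your proposal is correct and follows essentially the same route as the paper: verify the self-mapping condition by bounding $H_{1}$ via (A2) and $H_{2}$ via the $\Lambda$-estimate together with the dichotomy series, then invoke Krasnoselskii with the contraction and compactness lemmas already established. Your version is in fact slightly more careful than the paper's, since you check $H_{1}x+H_{2}y\in\Pi_{M_{0}}$ for distinct $x,y$ as Krasnoselskii's hypothesis literally requires, whereas the paper only writes the estimate for a single $\psi$; the bounds are uniform so nothing changes.
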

\begin{proof}
For $\psi\in\Pi_{M_{0}}$, we have%
\begin{align*}
\left\Vert H_{1}(\psi(t))+H_{2}(\psi(t))\right\Vert _{\mathcal{X}} &
\leq\left\Vert Q\left(  t,\psi\left(  t-g(t)\right)  \right)
-Q(t,0)\right\Vert _{\mathcal{X}}+\left\Vert Q(t,0)\right\Vert _{\mathcal{X}%
}\\
&  +%
{\displaystyle\sum\limits_{j=-\infty}^{t-1}}
\left\{  \left\Vert X(t)PX^{-1}\left(  j+1\right)  \right\Vert _{\mathcal{B}%
(\mathcal{X})}\right.  \\
&  \times\left.  \left\Vert \left(  A(j)-I\right)  Q\left(  j,x\left(
j-g(j)\right)  \right)  +G\left(  j,\psi\left(  j\right)  ,\psi\left(
j-g(j)\right)  \right)  \right\Vert _{\mathcal{X}}\right\}  \\
&  +%
{\displaystyle\sum\limits_{j=t}^{\infty}}
\left\{  \left\Vert X(t)\left(  I-P\right)  X^{-1}\left(  j+1\right)
\right\Vert _{\mathcal{B}(\mathcal{X})}\right.  \\
&  \times\left.  \left\Vert \left(  A(j)-I\right)  Q\left(  j,x\left(
j-g(j)\right)  \right)  +G\left(  j,\psi\left(  j\right)  ,\psi\left(
j-g(j)\right)  \right)  \right\Vert _{\mathcal{X}}\right\}  \\
&  \leq E_{1}M_{0}+b+\left[  \left(  \left\Vert A\right\Vert +1\right)
\left(  E_{1}M_{0}+b\right)  +2E_{2}M_{0}+a\right]  \left[  \beta_{1}%
\frac{1+\alpha_{1}}{\alpha_{1}}+\frac{\beta_{2}}{\alpha_{2}}\right]  \\
&  \leq M_{0}%
\end{align*}
which means $H_{1}(\psi)+H_{2}(\psi)\in\Pi_{M_{0}}$. Then all conditions of
fixed point theorem are satisfied and there exists a $x\in\Pi_{M_{0}}$ such
that $x(t)=H_{1}(x(t))+H_{2}(x(t)).$ The proof is complete.
\end{proof}

\begin{example}
Let the neutral delay discrete system be given by%
\begin{equation}
x(t+1)=\frac{1}{3}sgn\left(  \cos2\pi t\theta\right)  x(t)I+\frac{1}{10}\Delta
x\left(  t-\tau\right)  +%
\begin{bmatrix}
\sin(\frac{\pi}{2}t)+\sin(\frac{\pi}{2}t\sqrt{2})\\
\cos\pi t+\cos\pi t\sqrt{2}%
\end{bmatrix}
+\frac{1}{20}x\left(  t-\tau\right)  ,\label{final ex}%
\end{equation}
where $\theta$ is an irrational number, $\tau$ is a positive integer with
$t>\tau$ and Banach space $\mathcal{X}=\mathbb{R}.$ In \cite{Veeech}, it is
shown that $sgn\left(  \cos2\pi t\theta\right)  $ is an almost automorphic
function for $t\in\mathbb{Z}$ and $\theta$ is irrational. Therefore, the
matrix function%
\[
A(t)=%
\begin{bmatrix}
\frac{1}{3}sgn\left(  \cos2\pi t\theta\right)   & 0\\
0 & \frac{1}{3}sgn\left(  \cos2\pi t\theta\right)
\end{bmatrix}
\]
is discrete almost automorphic. Comparing (\ref{final ex}) with (\ref{neutral}%
), we have vector functions
\[
Q(t,x(t-g(t))))=%
\begin{bmatrix}
\frac{1}{10}x_{1}\left(  t-\tau\right)  \\
\frac{1}{10}x_{2}\left(  t-\tau\right)
\end{bmatrix}
\]
and%
\[
G(t,x(t),x(t-g(t)))=%
\begin{bmatrix}
\sin(\frac{\pi}{2}t)+\sin(\frac{\pi}{2}t\sqrt{2})+\frac{1}{20}x_{1}\left(  t-\tau\right)  \\
\cos\pi t+\cos\pi t\sqrt{2}+\frac{1}{20}x_{2}\left(  t-\tau\right)
\end{bmatrix}
,
\]
which are discrete almost automorphic. Then assumption (A1) is satisfied. For
any $\varsigma$, $\psi\in\Pi_{M_{0}}$, we have%
\[
\left\vert Q(t,\varsigma(t-g(t)))-Q(t,\psi(t-g(t)))\right\vert \leq\frac
{1}{10}\left\Vert \varsigma-\psi\right\Vert _{A(\mathbb{R})}%
\]
and%
\[
\left\vert G(t,\varsigma(t),\varsigma(t-g(t)))-G(t,\psi(t),\psi
(t-g(t)))\right\vert \leq\frac{1}{20}\left\Vert \varsigma-\psi\right\Vert
_{A(\mathbb{R})}.
\]
Then (A2-A3) hold with $E_{1}=\frac{1}{10},$ $E_{2}=\frac{1}{20}$, $a=2$ and
$b=0$.\newline By using Putzer algorithm (see \cite[Theorem 5.35]{bohner})
with $P$-matrices under the special case $\mathbb{T=Z}$, we get $P_{0}%
=I_{2\times2}$ and $P_{1}=0_{2\times2}.$ Then the principal fundamental matrix
solution of the homogeneous system%
\[
x(t+1)=\frac{1}{3}sgn\left(  \cos2\pi t\theta\right)  x(t)I
\]
can be written as
\[
X\left(  t\right)  =%
\begin{bmatrix}
3^{-t}\left(
{\displaystyle\prod\limits_{j=0}^{t-1}}
sgn\left(  \cos2\pi j\theta\right)  \right)   & 0\\
0 & 3^{-t}\left(
{\displaystyle\prod\limits_{j=0}^{t-1}}
sgn\left(  \cos2\pi j\theta\right)  \right)
\end{bmatrix}
.
\]
\newline Since%
\[
\left\vert 3^{s-t}\left(
{\displaystyle\prod\limits_{j=s}^{t-1}}
sgn\left(  \cos2\pi j\theta\right)  \right)  \right\vert =3^{s-t}\leq\beta
_{1}\left(  1+\alpha_{1}\right)  ^{s-t}\text{ for }t\geq s
\]
is satisfied for $\beta_{1}=1$ and $\alpha_{1}=1$, the homogeneous system
admits exponential dichotomy, as desired. Moreover, we may assume $\alpha
_{1}=\alpha_{2}$ and $\beta_{1}=$ $\beta_{2}$ since $P_{1}=0_{2\times2}.$ That
is, all assumptions of Theorem \ref{final theorem} hold. Hence, we conclude
that the system (\ref{final ex}) has an almost automorphic solution in
$\Pi_{M_{0}}$ whenever $M_{0}$ satisfies the inequality%
\[
\frac{1}{10}M_{0}+\frac{4}{10}M_{0}+\frac{3}{10}M_{0}+6\leq M_{0}%
\]
or equivalently%
\[
30\leq M_{0}.
\]

\end{example}

The following existence result is given in \cite{Lizama2}:

\begin{theorem}
\cite[Theorem 4.3]{Lizama2} Suppose $A\left(  k\right)  $ is discrete almost
automorphic and a non-singular matrix and the set $\left\{  A^{-1}\left(
k\right)  \right\}  _{k\in\mathbb{Z}}$ is bounded. Also, assume the
homogeneous system $U(k+1)=A\left(  k\right)  U\left(  k\right)  $,
$k\in\mathbb{Z}$, admits an exponential dichotomy on $\mathbb{Z}$ with
positive constants $\eta$, $\nu$, $\beta$, $\alpha$ and the function
$f:\mathbb{Z}\times E^{n}\rightarrow E^{n}$ is discrete almost automorphic in
$k$ for each $u$ in $E^{n}$, satisfying the following condition:

\begin{enumerate}
\item There exists a constant $0<L<\frac{(1-e^{-\alpha})(e^{\beta}-1)}%
{\eta(e^{\beta}-1)+\nu(1-e^{-\alpha})}$ such that%
\[
\left\Vert f\left(  k,u\right)  -f\left(  k,v\right)  \right\Vert \leq
L\left\Vert u-v\right\Vert
\]
for every $u,v\in E^{n}$ and $k\in\mathbb{Z}$. Then the system%
\[
U(k+1)=A\left(  k\right)  U\left(  k\right)  +f\left(  k,u\left(  k\right)
\right)  ,k\in\mathbb{Z}%
\]
has a unique almost automorphic solution.
\end{enumerate}
\end{theorem}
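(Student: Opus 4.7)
The plan is to prove this as a straightforward Banach contraction mapping argument, which is simpler than the Krasnoselskii-based argument used for the neutral system because there is no $\Delta Q$ term to split off. I would work in the Banach space $\mathcal{A}(E^n)$ of discrete almost automorphic functions with the sup norm, and define the operator
\[
(Tu)(k):=\sum_{j=-\infty}^{k-1} X(k)PX^{-1}(j+1)f(j,u(j))-\sum_{j=k}^{\infty} X(k)(I-P)X^{-1}(j+1)f(j,u(j)),
\]
where $X$ is the principal fundamental matrix of $U(k+1)=A(k)U(k)$ and $P$ is the (unique, by the uniqueness theorem proved earlier in the excerpt) projection of the exponential dichotomy. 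Any fixed point of $T$ is a solution of the semilinear equation by the solution formula stated just before the definition of $H$ in the excerpt.

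First I would verify that $T$ is well defined on $\mathcal{A}(E^n)$. The Lipschitz condition on $f$ together with property (vii) of Theorem \ref{thm properties of almost automorphic} gives $\|f(j,u(j))\|\le L\|u\|_{\mathcal{A}(E^n)}+\sup_j\|f(j,0)\|<\infty$, and the dichotomy estimates then make the two geometric-type series absolutely convergent, giving a bounded function. Second, I would show $Tu\in\mathcal{A}(E^n)$ whenever $u\in\mathcal{A}(E^n)$: for any integer sequence $\{k_n'\}$, extract a common subsequence $\{k_n\}$ along which $A(\cdot+k_n)\to\bar A$, $u(\cdot+k_n)\to\bar u$, and hence $f(\cdot+k_n,u(\cdot+k_n))\to\bar f(\cdot,\bar u(\cdot))$ (using that $f$ is almost automorphic in $k$ and Lipschitz in $u$, analogous to Theorem \ref{thmlipschitz}). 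Then invoke Theorem \ref{thm projection unique} to pass to the limit inside the two series (the exponential dichotomy bounds supply a summable dominating sequence, justifying dominated convergence), and repeat the argument with $\{-k_n\}$ to get the second defining limit.

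Third, and this is the crux of the numerical estimate, I would show that $T$ is a contraction. For $u,v\in\mathcal{A}(E^n)$,
\[
\|(Tu)(k)-(Tv)(k)\|\le L\|u-v\|_{\mathcal{A}(E^n)}\Bigl(\sum_{j=-\infty}^{k-1}\eta(1+\alpha_1)^{j+1-k}+\sum_{j=k}^{\infty}\nu(1+\alpha_2)^{k-j-1}\Bigr),
\]
where I am writing the estimates in the notation of Definition \ref{def exp dichotomy} translated to that of the statement; using the paper's convention, the geometric sums evaluate to $\eta\frac{1+\alpha_1}{\alpha_1}+\frac{\nu}{\alpha_2}$, which is exactly the reciprocal of the bound on $L$ (once one converts between the $(1+\alpha)^{\pm\cdot}$ and $e^{\pm\alpha\cdot}$ forms via the Remark following Definition \ref{def exp dichotomy}). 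The hypothesis on $L$ then yields a Lipschitz constant strictly less than $1$, so Banach's fixed point theorem gives a unique fixed point $u^\ast\in\mathcal{A}(E^n)$, which is the unique almost automorphic solution.

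The main obstacle will be the almost-automorphy step for $T$: one has to commute the limit in $k_n$ with two infinite sums whose summands involve the dichotomy factors $X(k+k_n)P X^{-1}(j+k_n+1)$. Routine pointwise convergence of these factors is delivered by Theorem \ref{thm projection unique}, but uniform-in-$j$ control is not — so I would carefully set up the dominated convergence argument using the uniform exponential bounds $\eta(1+\alpha_1)^{j+1-k}$ and $\nu(1+\alpha_2)^{k-j-1}$, which are independent of $n$ and summable in $j$. Everything else (boundedness of $Tu$, the contraction estimate, application of Banach's theorem) is then routine.
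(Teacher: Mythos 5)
This theorem is quoted verbatim from \cite{Lizama2} for the purpose of the comparison in Example \ref{ex comparison}; the paper itself offers no proof of it, so the only benchmark is the original argument of Lizama and Mesquita, which (as the introduction notes) is precisely an exponential-dichotomy-plus-contraction-mapping argument. Your proposal reproduces that strategy faithfully and correctly: the operator $T$ is the standard inversion of the semilinear equation, the geometric sums evaluate to $\frac{\eta}{1-e^{-\alpha}}+\frac{\nu}{e^{\beta}-1}$ (equivalently $\beta_{1}\frac{1+\alpha_{1}}{\alpha_{1}}+\frac{\beta_{2}}{\alpha_{2}}$ in the paper's convention), and this is exactly the reciprocal of the stated bound on $L$, so $T$ is a strict contraction on $\mathcal{A}(E^{n})$. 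The one genuine divergence from the source is that you justify $T(\mathcal{A}(E^{n}))\subseteq\mathcal{A}(E^{n})$ via Theorem \ref{thm projection unique} together with dominated convergence, rather than via the product-integral limit relations of \cite{Lizama2}; since Theorem \ref{thm projection unique} does not use invertibility of $A(k)$ or boundedness of $\{A^{-1}(k)\}$, your proof never touches that hypothesis and in fact establishes the strengthened statement that Example \ref{ex comparison} is advertising. Two minor points to tighten: for the claimed \emph{uniqueness} you also need the converse implication that every almost automorphic solution of the recurrence is a fixed point of $T$ --- this follows because such a solution and $T$ applied to its own forcing term are both bounded solutions of the same linear inhomogeneous system, so their difference vanishes by Lemma \ref{Lem2}; and in the almost-automorphy step you should extract the subsequence $\{k_{n}\}$ simultaneously for $A$, $u$, and $f$ before passing to the limit, exactly as you indicate. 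Neither point affects the validity of the argument.
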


\begin{example}
\label{ex comparison} The conditions of our existence result are weaker than
the conditions of \cite[Theorem 4.3]{Lizama2}. In \cite[Theorem 4.3]{Lizama2},
the authors require boundedness of the inverse matrix $A^{-1}\left(  t\right)
$ to deduce existence of almost automorphic solutions of the system
\[
x\left(  t+1\right)  =A\left(  t\right)  x\left(  t\right)  +f\left(
t,x\right)  .
\]
In particular, \cite[Theorem 4.3]{Lizama2} is invalid for the system%
\begin{equation}
x(t+1)=%
\begin{bmatrix}
\frac{1}{2}\sin\left(  \frac{\pi}{2}t\right)   & 0\\
0 & \frac{1}{2}\sin\left(  \frac{\pi}{2}t\right)
\end{bmatrix}
x\left(  t\right)  +f(t,x),\ \ t\in\mathbb{Z}\label{ex2}%
\end{equation}
since the matrix
\[
A\left(  t\right)  =%
\begin{bmatrix}
\frac{1}{2}\sin\left(  \frac{\pi}{2}t\right)   & 0\\
0 & \frac{1}{2}\sin\left(  \frac{\pi}{2}t\right)
\end{bmatrix}
\]
is singular for some integers. However, Theorem \ref{final theorem} implies the existence of
discrete almost automorphic solution of the system (\ref{ex2}) for an almost
automorphic function $f(t,x)$ satisfying (A1) and (A3).
\end{example}

One may repeat the same procedure in the last section by replacing
$\mathcal{A}\left(  \mathcal{X}\right)  $ with $\mathcal{AP}\left(
\mathcal{X}\right)  $, the space of all almost periodic functions on
$\mathcal{X}$, and the assumption (A1) with the following\newline

\begin{description}
\item[A1$^{\prime}$] Functions $A(t),$ $g(t),$ $Q(t,u)$ and $G(t,u,v)$ are
almost periodic in $t$\newline
\end{description}

\noindent to arrive at the following result:

\begin{theorem}
[Almost periodic solutions of the system (\ref{neutral})]Assume (A1$^{\prime}%
$) and (A2-A4). Let $M_{0}$ be a constant satisfying the following inequality%
\[
E_{1}M_{0}+b+\left[  \left(  \left\Vert A\right\Vert +1\right)  \left(
E_{1}M_{0}+b\right)  +2E_{2}M_{0}+a\right]  \left[  \beta_{1}\frac
{1+\alpha_{1}}{\alpha_{1}}+\frac{\beta_{2}}{\alpha_{2}}\right]  \leq M_{0},
\]
where $E_{1}\in\left(  0,1\right)  $ and%
\[
a:=\left\Vert G(t,0,0)\right\Vert _{\mathcal{X}},b:=\left\Vert
Q(t,0)\right\Vert _{\mathcal{X}}.
\]
Then the equation (\ref{neutral}) has an almost periodic solution in
$\widetilde{\Pi}_{M_{0}}:=\left\{  x\in\mathcal{AP}(\mathcal{X}),\left\Vert
x\right\Vert _{\mathcal{AP}(\mathcal{X})}\leq M_{0}\right\}  $.
\end{theorem}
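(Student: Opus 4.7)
The plan is to repeat the fixed point argument used for Theorem \ref{final theorem} verbatim, with $\mathcal{A}(\mathcal{X})$ replaced by $\mathcal{AP}(\mathcal{X})$ throughout, and to invoke the almost periodic analogues of the structural results used earlier (closure of $\mathcal{AP}(\mathcal{X})$ under sums, scalar multiples, translations, and compositions $g(t,\varphi(t))$ when $g$ is Lipschitz in the second variable uniformly in $t$; analogous statement for $u(t-v(t))$). All of these facts are classical for Bochner's normality condition, which in the almost periodic setting strengthens to uniform convergence on $\mathbb{Z}$ of the shifted sequence rather than pointwise double limits, and in particular implies the almost automorphic versions already used in the paper.

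First I would redefine $H=H_{1}+H_{2}$ by exactly the formulas (\ref{H1}) and the one above it, and set up the closed convex bounded set
\[
\widetilde{\Pi}_{M_{0}}:=\left\{x\in\mathcal{AP}(\mathcal{X}):\|x\|_{\mathcal{AP}(\mathcal{X})}\le M_{0}\right\}.
\]
The contraction property of $H_{1}$ on $\widetilde{\Pi}_{M_{0}}$ is immediate from (A2) and $E_{1}<1$ and does not depend on whether we are in $\mathcal{A}(\mathcal{X})$ or $\mathcal{AP}(\mathcal{X})$. Continuity of $H_{2}$ and the bound $\|H_{1}\psi+H_{2}\psi\|\le M_{0}$ under the hypothesis of the theorem are proved by the same chain of inequalities as in the proof of Theorem \ref{final theorem}, using (A4) to control the dichotomy kernels by $\beta_{1}(1+\alpha_{1})^{s-t}$ and $\beta_{2}(1+\alpha_{2})^{t-s}$ and (A2)--(A3) to estimate the Lipschitz pieces.

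The two steps that require genuine attention are (a) showing that $H$ maps $\mathcal{AP}(\mathcal{X})$ into itself and (b) showing that $H_{2}(\widetilde{\Pi}_{M_{0}})$ is relatively compact. For (a), given $x\in\mathcal{AP}(\mathcal{X})$, the composition lemmas together with (A1$'$) imply that $\Lambda(j,x)=(A(j)-I)Q(j,x(j-g(j)))+G(j,x(j),x(j-g(j)))$ is almost periodic in $j$; the two dichotomy sums then inherit almost periodicity because the dichotomy kernels are exponentially dominated and one can invoke the almost periodic version of Theorem \ref{thm projection unique} (which holds by an identical argument after replacing pointwise subsequential limits by uniform ones, using that the common subsequence $\{\theta_{k}\}$ witnesses almost periodicity of $A$, $g$, $Q$, $G$, and $x$ simultaneously). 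For (b), one reruns the argument from the almost automorphic case: the bounds $\|Q(t,\zeta(t-g(t)))\|\le E_{1}M+b$ and $\|G(t,\zeta(t),\zeta(t-g(t)))\|\le 2E_{2}M+a$ give uniform boundedness of $H_{2}(\widetilde{\Pi}_{M_{0}})$, and the bound on $\Delta(H_{2}\zeta)(t)$ coming from (\ref{inequality}) gives equicontinuity; Arzel\`a--Ascoli then yields relative compactness.

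The main obstacle I expect is verifying that the $\mathcal{A}(\mathcal{X})$-version of Theorem \ref{thm projection unique} adapts cleanly to $\mathcal{AP}(\mathcal{X})$, that is, that when $A$ is almost periodic the shifted kernels $X(t+\theta_{k})PX^{-1}(s+\theta_{k})$ converge uniformly (in $t,s$ over the appropriate half-line) along a common subsequence. This is the one place where the stronger uniform-normality characterization of almost periodicity, rather than mere Bochner-type pointwise double limits, actually has to be used; the rest of the argument is a mechanical transcription of the proof of Theorem \ref{final theorem}. Once this uniform limit is in hand, Krasnoselskii's theorem applied to $H_{1}$ and $H_{2}$ on $\widetilde{\Pi}_{M_{0}}$ produces the desired almost periodic fixed point $x=H_{1}x+H_{2}x$, which is the required solution of (\ref{neutral}).
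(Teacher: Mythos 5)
Your proposal matches the paper's approach exactly: the paper offers no separate proof, stating only that one repeats the argument of Theorem \ref{final theorem} with $\mathcal{A}(\mathcal{X})$ replaced by $\mathcal{AP}(\mathcal{X})$ and (A1) by (A1$'$), which is precisely your plan. In fact you go further than the paper by explicitly flagging the one step needing genuine verification, namely the almost periodic analogue of Theorem \ref{thm projection unique} for the shifted dichotomy kernels, which the paper passes over in silence.
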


\bigskip

\end{document}